\newcommand\restr[2]{{
  \left.\kern-\nulldelimiterspace 
  #1 
  \littletaller 
  \right|_{#2} 
  }}
\newcommand{\littletaller}{\mathchoice{\vphantom{\big|}}{}{}{}}
\newcommand\reallywidehat[1]{%
\savestack{\tmpbox}{\stretchto{%
  \scaleto{%
    \scalerel*[\widthof{\ensuremath{#1}}]{\kern-.6pt\bigwedge\kern-.6pt}%
    {\rule[-\textheight/2]{1ex}{\textheight}}
  }{\textheight}%
}{0.5ex}}%
\stackon[1pt]{#1}{\tmpbox}%
}
\title{A Blowup Solution of Multispeed Klein-Gordon System in Space Dimension Two with Small Initial Data}
\author{Xilu Zhu}
\date{}
\address{Dept. of Mathematics, University of Southern California,
 Los Angeles, CA 90089}
\email{xiluzhu@usc.edu}
\begin{document}
\maketitle
\newtheorem{lemma}{Lemma}[section]
\newtheorem{cor}[lemma]{Corollary}
\newtheorem{prop}[lemma]{Proposition}
\newtheorem{defn}[lemma]{Definition}
\newtheorem{theorem}[lemma]{Theorem}

\theoremstyle{definition}
\newtheorem{remark}[lemma]{Remark}

\begin{abstract}
We find an example to illustrate that the first nondegeneracy condition of (\ref{1.2}) is actually needed in proving the global exsitence of 2D multispeed Klein-Gordon system with small initial data (See \cite{y2}). We construct a collection of special Klein-Gordon dispersive relations and by iterating the corresponding profiles we can find a blowup in finite time.
\end{abstract}

\section{Introduction}
In this article, we will consider a system of quasilinear, multispeed Klein-Gordon equations in space dimension two, namely
\begin{align}
    \left(\partial^2_t-c^2_\alpha \Delta+b^2_\alpha\right) u_\alpha=\mathcal{Q}_\alpha \left(u,\partial u,\partial^2 u\right),\ \ \ \ 1\le\alpha\le d, \tag{1.1} \label{1.1}
\end{align} 
where the speeds $c_\alpha>0$ and the masses $b_\alpha\in\mathbb{R}-\left\{0\right\}$ are arbitrary positive parameters, and $\mathcal{Q}$ is a quadratic, quasilinear nonlinearity satisfying a suitable symmetry condition (See (1.2) in \cite{y1}). We will show that there exists such a 2D multispeed Klein-Gordon system with a small initial data that it does not have a global solution, namely we expect the solution to blow up in finite time. Then, this result tells us that in order to make sure that the solution to 2D multispeed Klein-Gordon system is always global, we have to impose some restrictions on the coefficients $b_\alpha$ and $c_\alpha$.\par
In 2014, Ionescu and Pausader \cite{a1} obtained the global results of multispeed Klein-Gordon system in 3D with two \underline{nondegeneracy conditions}
\begin{align*}
        \begin{cases}
        b_\sigma-b_\mu-b_\nu\neq 0 \\
        (c_\mu-c_\nu)(c_\mu^2b_\nu-c_\nu^2b_\mu)\ge 0    \end{cases}\ \ \ \ \ \ \ \ \mbox{for any }\sigma,\mu,\nu\in\left\{1,\dots,d\right\}\tag{1.2}.\label{1.2}
\end{align*}
It turns out that these two conditions are redundant in 3D case later on. In 2017, Deng \cite{y1} removed these two nondegeneracy conditions by further exploiting the rotation vector fields $\Omega$. With such rotation vector fields $\Omega$, one is able to restrict the main part of the integral to the region where $\left|\sin\angle\xi,\eta\right|\lesssim 2^{-(\frac{1}{2}-\delta)m}$ ($t\sim 2^m$ and $0<\delta\ll 1$), so with this additional gain, the Z-norm argument is able to be closed as desired.\par
However, in 2D case, things are different and difficult. It's well known that the basic dispersive estimate gives us 
$$\left\|P_k e^{-it\Lambda}f\right\|_{L^\infty}\lesssim (1+t)^{-\frac{n}{2}}(1+2^{2k})\left\|f\right\|_{L^1},$$
where $n$ refers to the space dimension. This means that the decay of the solution of the Klein-Gordon system is less in 2D than in 3D, which will make the problem more difficult. In 2023, we proved the global result in 2D case with two nondegeneracy conditions (\ref{1.2}). Now, this paper will follow Deng's idea \cite{y1} to illustrate that at least the first nondegeneracy condition of (\ref{1.2}) $b_\sigma-b_\mu-b_\nu\neq 0$ is actually needed. \par
For convenience, we first define, for $\forall x\in\mathbb{R}$, 
$$\mbox{sgn } x\triangleq\begin{cases}
+1\ \ \ \ \ \mbox{, if }x>0\\
0\ \ \ \ \ \ \ \mbox{, if }x=0\\
-1\ \ \ \ \ \mbox{, if }x<0\end{cases}.$$
We next denote 
\begin{align*}
    \Phi_{\sigma\mu\nu}(\xi,\eta)&=(-1)^{(\text{sgn} b_\sigma+3)/2}\sqrt{c_\sigma^2\left|\xi\right|^2+b_\sigma^2}-(-1)^{(\text{sgn} b_\mu+3)/2}\sqrt{c_\mu^2\left|\xi-\eta\right|^2+b_\mu^2}\\
    &\ \ \ \ \ \ \ -(-1)^{(\text{sgn} b_\nu+3)/2}\sqrt{c_\nu^2\left|\eta\right|^2+b_\nu^2},\tag{1.3}\label{1.3}
\end{align*}
where $c_\sigma,c_\mu,c_\nu>0$ and $b_\sigma,b_\mu,b_\nu\in\mathbb{R}-\left\{0\right\}$.
Recall that if restricting the nonlinear term to be $u^2$ and letting the initial datum to be $0$ in (\ref{1.1}), then by iteration, the Duhamel's formula will give us that
\begin{align*}
    \widehat{f^\sigma}(t,\xi)=\int_0^t \int_{\mathbb{R}^2} e^{is\Phi_{\sigma\mu\nu}(\xi,\eta)}\widehat{f^\mu}(\xi-\eta)\widehat{f^\nu}(\eta)\,d\eta ds, \tag{1.4}\label{1.4}
\end{align*}
where $\widehat{f^*}\triangleq e^{it\Lambda_*}u_*,\ \ \ *\in\left\{\sigma,\mu,\nu\right\}$ are so-called profiles. The main idea here is to (at least formally) focus on and estimate $\left\|\widehat{f^\sigma}\right\|_{L^2}$. From now on, we will abbreviate $\Phi_{\sigma\mu\nu}$ to $\Phi$ if no ambiguity.\par

 In \cite{a1}, we see that these two nondegeneracy conditions (\ref{1.2}) are used to guarantee that if $\Phi(0,0)=0$ and $\nabla_\eta\Phi(0,0)=0$, then $\det(\nabla_{\eta\eta}\Phi(0,0))\neq 0$. Thus, we first need to try to find a pair of numbers $c_\sigma,c_\mu,c_\nu,b_\sigma,b_\mu,b_\nu$ such that $\Phi(0,0)=0$, $\nabla_\eta\Phi(0,0)=0$, and $\det(\nabla_{\eta\eta}\Phi(0,0))=0$. From $\Phi(0,0)=0$, we get that 
$$(-1)^{(\text{sgn} b_\sigma+3)/2}\left|b_\sigma\right|-(-1)^{(\text{sgn} b_\mu+3)/2}\left|b_\mu\right|-(-1)^{(\text{sgn} b_\nu+3)/2}\left|b_\nu\right|=0.$$
For simplicity, we may pick $b_\sigma=1,b_\mu=2,b_\nu=-1$ as a try. In this case, with some elementary calculations, we can get 
\begin{align*}
    \left(\nabla_{\eta\eta}\Phi\right)(0,0)=\begin{pmatrix}
        -\frac{c_\mu^2}{2}+c_\nu^2 & 0\\
        0 & -\frac{c_\mu^2}{2}+c_\nu^2
    \end{pmatrix}
\end{align*}
Therefore, for simplicity, we may pick $c_\sigma^2=c_\nu^2=1,c_\mu^2=2$ as a try again and thus we consider
\begin{align*}
    \Phi(\xi,\eta)&=\sqrt{\left|\xi\right|^2+1}-\sqrt{2\left|\xi-\eta\right|^2+4}+\sqrt{\left|\eta\right|^2+1} \\
    &=\frac{1}{2}\left\langle \xi,\eta\right\rangle+O\left(\left|\xi\right|^4+\left|\eta\right|^4\right),
\end{align*}
which is exactly (\ref{2.2}) later on. Fortunately, it turns out that with such parameters $c_\sigma,c_\mu,c_\nu,b_\sigma,b_\mu,b_\nu$, we are able to construct a illposed solution that will blowup eventually in time.\par
To formulate it rigorously, by letting $\widetilde{u}_\sigma=(\partial_t-i\Lambda_\sigma)u_\sigma$ and $\widetilde{u}=(\widetilde{u}_\sigma)_{\sigma\in\left\{-1,-2,\dots,-d\right\}\cup\left\{1,2,\dots,d\right\}}$, we can reduce (\ref{1.1}) to 
$$(\partial_t+i\Lambda_\sigma)\widetilde{u}_\sigma=\mathcal{N}_\sigma(\widetilde{u},\widetilde{u}),\ \ \ \ \ \ \ \sigma\in\left\{-1,-2,\dots,-d\right\}\cup\left\{1,2,\dots,d\right\},$$
where $\mathcal{N}_\sigma$ is some quasilinear quadratic term. Now, we can consider the following Klein-Gordon system in space dimension two
\begin{align*}
\begin{cases}
    \left(\partial_t +i\Lambda_1\right) u_1=0 \\
    \left(\partial_t +i\Lambda_2\right) u_2=0 \\
    \left(\partial_t +i\Lambda_3\right) u_3=0 \\
    \left(\partial_t +i\Lambda_1\right) u_4=u_2\cdot\widebar{u_1} \\
    \left(\partial_t +i\Lambda_2\right) u_5=u_3\cdot\widebar{u_2} \\
    \left(\partial_t +i\Lambda_1\right) u_6=u_5\cdot\widebar{u_4} 
\end{cases}, \tag{1.5}\label{1.5}
\end{align*}
where $\widehat{\Lambda_1 f}(\xi)\triangleq\sqrt{\left|\xi\right|^2+1}\cdot\widehat{f}(\xi)$, $\widehat{\Lambda_2 f}(\xi)\triangleq\sqrt{2\left|\xi\right|^2+4}\cdot\widehat{f}(\xi)$ and $\widehat{\Lambda_3 f}(\xi)\triangleq\sqrt{4\left|\xi\right|^2+16}\cdot\widehat{f}(\xi)$. \par
To see the effect of this degeneracy, we consider two inputs of the iteration to be
$$\widehat{f_\mu}(s,\xi-\eta)=2^{cj}\chi(2^j(\xi-\eta)),\ \ \ \ \ \ \ \widehat{f_\nu}(s,\eta)=2^{cj}\chi(2^j\eta),$$
where $\chi$ is a cutoff function, $s$ represents the time and $c$ is some constant. Then, we localize the time in $\left|s
\right|\sim 2^m$. Therefore, in the region where $\left|\eta\right|\sim\left|\xi-\eta\right|\sim 2^{-j}$ and $\left|\xi\right|\sim 2^{-3j}$ with $j=\frac{m}{4}$, we have $\left|\Phi\right|\lesssim 2^{-m}$ due to (\ref{2.2}), so the oscillatory factor $e^{is\Phi}$ is intuitively irrelevant. Thus, using the volume counting to estimate 
 $\left\|\widehat{f_\sigma}\right\|_{L^\infty}$ when its frequency is localized at $\left|\xi\right|\sim 2^{-3j}$, in view of (\ref{1.4}), we see that 
\begin{align*}
    \left\|\widehat{P_{-3j} f_\sigma}\right\|_{L^\infty}&\approx\left\|\int_{2^m}^{2^{m+1}} \int_{\mathbb{R}^2} e^{is\Phi_{\sigma\mu\nu}(\xi,\eta)}\cdot 2^{cj}\chi(2^j(\xi-\eta))\cdot 2^{cj}\chi(2^j\eta)\,d\eta ds\right\|_{L^\infty}\\
    &\lesssim \int_{2^m}^{2^{m+1}} \int_{\left|\eta\right|\lesssim 2^{-j}} 2^{cj}\cdot 2^{cj}\,d\eta ds\lesssim2^m\,2^{-2j}\,2^{2cj}=2^{(3j)(2c+2)/3},
\end{align*}
which implies that a piece of the output function will be of form 
$$2^{(3j)(2c+2)/3}\chi\left(2^{3j}\xi\right).$$
Now, if we start at $c=0$ and do the first iteration, we will see that the first output is $2^{\frac{2}{3}\cdot 3j}\chi\left(2^{3j}\xi\right)$ and the second output is $2^{\frac{10}{9}\cdot 9j}\chi\left(2^{9j}\xi\right)$. Note that $\left\|2^{\frac{10}{9}\cdot 9j}\chi\left(2^{9j}\xi\right)\right\|_{L^2}\approx 2^j\gg 1$. Therefore, it's reasonable for us to focus on the system (\ref{1.5}) and the main contribution where $\left|\eta\right|\sim\left|\xi-\eta\right|\sim 2^{-j}$, $\left|\xi\right|\sim 2^{-3j}$ and $\left|s\right|\sim 2^{4j}$ to find a illposed solution that will blowup eventually in time.\par
Our main result is
\vspace{1em}
\begin{theorem}
Consider the system (\ref{1.5}) in $\left[0,+\infty\right)\times\mathbb{R}_x^2$. Then for any arbitrarily small $\varepsilon>0$, there exists an initial condition $\textbf{u}_0$ such that\par
(i) If $l\le 0$, then $\widehat{P_l\,\textbf{u}_0} \neq 0$;\par
(ii) $\left\|\textbf{u}_0\right\|_{L^2}\lesssim \varepsilon\ll 1$;\par
(iii) Denote $\textbf{u}$ be the corresponding global solution of the Cauchy problem. Then we have 
$$\lim_{t\rightarrow\infty}\left\|\textbf{u}(t)\right\|_{L^2}=+\infty.$$
\end{theorem}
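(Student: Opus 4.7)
The plan is to upgrade the heuristic of the introduction to a genuine pointwise lower bound for $\widehat{f_6}$ along a divergent sequence of times, via a superposition of scales. We choose initial data
\begin{align*}
\widehat{u_i}(0,\xi)=\sum_{k\geq 1}\chi\bigl(2^{j_k}\xi\bigr),\qquad i=1,2,3;\qquad u_4(0)=u_5(0)=u_6(0)=0,
\end{align*}
where $\chi$ is a smooth, real, non-negative, even bump adapted to the annulus $\{1/2\le|\zeta|\le 2\}$ and $j_1<j_2<\cdots$ is sparse (e.g.\ $j_{k+1}\ge 100\,j_k$). The pieces live on disjoint dyadic annuli, so $\|u_i(0)\|_{L^2}^2\simeq \sum_k 2^{-2j_k}\le\varepsilon^2$ once $j_1$ is large; condition~(i) is enforced by adjoining one extra low-frequency piece.

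The first three equations are linear, so $\widehat{f_i}(s,\cdot)\equiv\widehat{u_i}(0,\cdot)$ for $i=1,2,3$ and all $s\ge 0$. Iterating Duhamel's formula we obtain
\begin{align*}
\widehat{f_4}(t,\xi)&=\int_0^t\!\!\int_{\mathbb{R}^2}e^{is\Phi_{12\bar 1}(\xi,\eta)}\,\widehat{f_2}(\xi-\eta)\,\overline{\widehat{f_1}(-\eta)}\,d\eta\,ds,\\
\widehat{f_5}(t,\xi)&=\int_0^t\!\!\int_{\mathbb{R}^2}e^{is\Phi_{23\bar 2}(\xi,\eta)}\,\widehat{f_3}(\xi-\eta)\,\overline{\widehat{f_2}(-\eta)}\,d\eta\,ds,\\
\widehat{f_6}(t,\xi)&=\int_0^t\!\!\int_{\mathbb{R}^2}e^{is\Phi_{12\bar 1}(\xi,\eta)}\,\widehat{f_5}(s,\xi-\eta)\,\overline{\widehat{f_4}(s,-\eta)}\,d\eta\,ds.
\end{align*}
Both phases admit the expansion $\tfrac12\langle\xi,\eta\rangle+O(|\xi|^4+|\eta|^4)$ near the origin (the one for $\Phi_{23\bar 2}$ is computed exactly as in the introduction, up to a harmless constant), so $|\Phi|\lesssim|\xi||\eta|$ in the regimes below.

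The heart of the argument is a scale-by-scale lower bound. Fix $k$, set $j=j_k$, and consider only the $k$-th piece of the data. Choose a small $\delta>0$ once and for all. In the first concentration region $|\eta|,|\xi-\eta|\sim 2^{-j}$, $|\xi|\sim 2^{-3j}$ and for times $s\le \delta\,2^{4j}$, the inequality $|s\Phi_{12\bar 1}|\le C\delta$ gives $e^{is\Phi}=1+O(\delta)$. Since the integrand is then a non-negative product of bumps up to this factor, a direct volume count produces $\widehat{f_4}(s,\xi)=(1+O(\delta))c_0\,s\,2^{-2j}$ with $c_0>0$, and the analogous formula holds for $\widehat{f_5}$. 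Letting $s\to\infty$ via a $+i\epsilon$-regularized $s$-integration one identifies the asymptotic profiles $F_4(\eta),F_5(\eta)$ as $\pi\delta(\Phi_{12\bar 1})$- and $\pi\delta(\Phi_{23\bar 2})$-contributions on the corresponding resonant cones, both with real part $\gtrsim 2^{2j}$ on $|\eta|\sim 2^{-3j}$. Substituting these into the outer Duhamel and repeating the sign analysis in the second concentration region $|\eta|,|\xi-\eta|\sim 2^{-3j}$, $|\xi|\sim 2^{-9j}$ at $s\sim 2^{12j}$ yields $\operatorname{Re}\widehat{f_6}(t,\xi)\gtrsim 2^{10j}$ on $|\xi|\sim 2^{-9j}$ for all $t\ge 2^{12j}$. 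Consequently $\|P_{-9j_k}f_6(t)\|_{L^2}\gtrsim 2^{j_k}$ for $t\ge 2^{12j_k}$; orthogonality of outputs on disjoint dyadic annuli then gives $\|u_6(t)\|_{L^2}^2\gtrsim\sum_{k:\,2^{12j_k}\le t}2^{2j_k}\to\infty$ as $t\to\infty$.

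The main technical obstacle is showing that the remaining contributions do not cancel this main term. Three regimes must be controlled: \emph{(a)} cross-scale interactions ($k\ne k'$), which by a frequency-support count contribute at least one extra factor $2^{-j_k}$ thanks to the sparsity $j_{k+1}\ge 100\,j_k$ and therefore cannot cancel the single-scale main term; \emph{(b)} tail times $s\gg 2^{4j}$ (inner) or $s\gg 2^{12j}$ (outer), where $|s\Phi|\gg 1$ off the resonant cone $\{\Phi=0\}$ and integration by parts in $s$ controls the contribution; \emph{(c)} the resonant cone in the tail regime, on which $\nabla_\eta\Phi$ does not vanish away from the origin, so a spatial integration by parts in $\eta$ produces an acceptable error. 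The single most delicate step is verifying that the imaginary (principal-value) parts of $F_4,F_5$ do not swamp the main $\delta$-function contribution at the outer iteration; this reduces to a symmetry argument using the evenness of $\chi$ together with the leading odd dependence of the phase under $\zeta\mapsto-\zeta$, which shows $\operatorname{Im} F_4,\operatorname{Im} F_5=o(2^{2j})$. With these three estimates in hand the main term dominates and the theorem follows.
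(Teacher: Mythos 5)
Your overall architecture (two Duhamel iterations; first concentration at $|\eta|\sim 2^{-j}$, $|\xi|\sim 2^{-3j}$, $s\sim 2^{4j}$; second at $|\eta|\sim 2^{-3j}$, $|\xi|\sim 2^{-9j}$, $s\sim 2^{12j}$; conclusion $\|P_{-9j}f_6\|_{L^2}\gtrsim 2^{j}$) coincides with the paper's, but the step you single out as the most delicate one fails as stated. Writing $\Phi(\xi,\eta)=\langle\xi,\eta\rangle+O(|\xi|^4+|\eta|^4)$ up to constants, the quartic-in-$\eta$ part is \emph{even} under $\eta\mapsto-\eta$ and sign-definite (for $\xi=0$ one computes $\Phi(0,\eta)=-|\eta|^4/16+O(|\eta|^6)$), and in the critical region $|\eta|\sim|\xi|^{1/3}$ it is of the \emph{same} order $2^{-4j}$ as the bilinear term; indeed the resonant set is precisely where the two balance, so a point on $\{\Phi=0\}$ is mapped by $\eta\mapsto-\eta$ to a point where $|\Phi|\sim 2^{-4j}$. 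Hence $\Phi$ has no approximate odd symmetry where it matters, the level-set density $g(u)=\int_{\{\Phi=u\}}\chi\chi\,|\nabla_\eta\Phi|^{-1}\,d\sigma$ is not even in $u$, and $\operatorname{Im}F_4=\mathrm{p.v.}\!\int g(u)u^{-1}du$ is generically of full size $2^{2j}$, not $o(2^{2j})$. This matters for you because your second iteration relies on $\operatorname{Re}(F_5\overline{F_4})$ being positive and of size $2^{4j}$. The repair is to abandon the sign and track the full complex constant, as the paper does: one needs $\widehat{f_4},\widehat{f_5}\approx C\cdot 2^{2j}$ with a single nonzero $C$ \emph{uniformly} over the annulus $|\eta|\sim 2^{-3j}$, so that $\widehat{f_5}\overline{\widehat{f_4}}\approx|C|^22^{4j}$ factors out and the outer $\delta(\Phi)$-piece alone gives $|\widehat{f_6}|\gtrsim 2^{10j}$. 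That uniformity is exactly what is missing from your sketch: the integrations by parts in $\eta$ and in $s$ in the outer iteration require $|\nabla_\xi\widehat{f_4}|\lesssim 2^{3j}|\widehat{f_4}|$ (the iterated amplitude oscillates on the scale $2^{-3j}$ of its own frequency support, unlike the original bump) and $|\partial_s\widehat{f_4}|\sim 2^{2j}s^{-1}e^{i\alpha 2^{-4j}s}$ with $\alpha\neq 0$ off-resonant against the outer phase at its critical point; these are Propositions 3.2--3.5 of the paper, and without them the ``acceptable errors'' in your regimes (b),(c) are unsubstantiated.

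Separately, your data do not satisfy conclusion (i): a sparse union of annuli $|\xi|\sim 2^{-j_k}$ with $j_{k+1}\ge 100j_k$ has $\widehat{P_l\mathbf{u}_0}=0$ for every $l$ off those annuli, and adjoining one extra piece cannot make $\widehat{P_l\mathbf{u}_0}\neq 0$ for all $l\le 0$. The superposition is also unnecessary: the paper takes the single bump $\widehat{g_0}=\varepsilon\varphi_{\le 0}$ (nonzero on every annulus $l\le 0$), proves $\|P_{-9l}f_6(t)\|_{L^2}\approx 2^{l}$ for every $l$ with $t\ge 2^{12.05l}$, and lets $l=l(t)\to\infty$; no orthogonality over scales is needed. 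If you insist on sparse data, you must additionally estimate $\widehat{f_4},\widehat{f_5}$ at all intermediate frequencies (after one convolution their supports fill the whole ball $|\zeta|\lesssim 2^{-j_1}$), not only near $2^{-3j_k}$, before the outer $\eta$-integral can be controlled.
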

\begin{proof}
    This follows from Chapter 3.
\end{proof}
\vspace{2em}
\begin{remark}
\ \par
In Theorem 1.1, it's reasonable for us to assume that the initial data $\textbf{u}_0$ does not have the high frequency part. This is because in view of the proofs by Z-norm method (\cite{a1}-\cite{y2}) we can always control the high frequency part by using the energy estimate.
\end{remark}
\vspace{2em}
To prove Theorem 1.1, we need to introduce the \underline{profile}:
$$f_\sigma(t)\triangleq e^{it\Lambda_\sigma}u_\sigma(t).$$
We will use Duhamel's formula to express $\widehat{f_4}$ $\left(\widehat{f_5}\right)$ and $\widehat{f_6}$. Next, we view $\widehat{f_4}$ $\left(\widehat{f_5}\right)$ as our first iteration (with $\widehat{f_4}$ be the output and $\widehat{(\textbf{u}_0)_1},\widehat{(\textbf{u}_0)_2}$ be the inputs) and view $\widehat{f_6}$ as our second iteration (with $\widehat{f_6}$ be the output and $\widehat{f_4},\widehat{f_5}$ be the inputs). Then, we estimate the outputs $\widehat{f_4}$ and $\widehat{f_6}$ in turn, and find the blow up in finite time.\par
The plan of this article: in chapter 2, we will collect all notations and elementary lemmas that are needed in the main proof; in chapter 3, we will estimate the outputs of two iterations and obtain a blow up in finite time.
\vspace{1em}
\section{Notations and Lemmas}
In this chapter, we will introduce all notations that will be used later on. In addition, we will study the elementary properties of our phase function.\par
First, to do the Littlewood-Paley projections later on, we define $\varphi:\mathbb{R}\rightarrow\left[0,1\right]$ to be an even smooth function that is supported in $\left[-1.2,1.2\right]$ and equals 1 in $\left[-1.1,1.1\right]$. Abusing the notation, we also write $\varphi:\mathbb{R}^2\rightarrow\left[0,1\right]$ to be the corresponding radial function on $\mathbb{R}^2$. Let
\begin{align*}
    \varphi_k(x)&\triangleq \varphi\left(\left|x\right|/2^k\right)-\varphi\left(\left|x\right|/2^{k-1}\right)\ \ \ \ \ \mbox{for any }k\in\mathbb{Z},\\
    \varphi_I&\triangleq \sum_{m\in I\cap\mathbb{Z}} \varphi_m \ \ \ \ \ \mbox{for any }I \subseteq\mathbb{R}.
\end{align*}
For any $c\in\mathbb{R}$ let
$$\varphi_{\le c}\triangleq\varphi_{\left(-\infty,c\right]},\ \ \varphi_{\ge c}\triangleq\varphi_{\left[c,+\infty\right)},\ \ \varphi_{< c}\triangleq\varphi_{\left(-\infty,c\right)},\ \ \varphi_{> c}\triangleq\varphi_{\left(c,+\infty\right)}.$$
For any $a<b\in\mathbb{Z}$ and $j\in\left[a,b\right]\cap\mathbb{Z}$ let
\begin{align*}
    \varphi_j^{\left[a,b\right]}\triangleq\begin{cases}
        \varphi_j&\mbox{if }a<j<b,\\
        \varphi_{\le a}&\mbox{if }j=a,\\
        \varphi_{\ge b}&\mbox{if }j=b.
    \end{cases}
\end{align*}\par
Let $P_k$, $k\in\mathbb{Z}$, denote the operator on $\mathbb{R}^2$ defined by the Fourier multiplier $\xi\rightarrow\varphi_k(\xi)$. Let $P_{\le c}$ (respectively $P_{>c}$) denote the operators on $\mathbb{R}^2$ defined by the Fourier multipliers $\xi\rightarrow\varphi_{\le c}(\xi)$ (respectively $\xi\rightarrow\varphi_{>B}(\xi)$). \par
\vspace{2em}
Next, recall (\ref{1.3}) and pick $c_\sigma^2=c_\nu^2=1,c_\mu^2=2,b_\sigma=1,b_\mu=2,b_\nu=-1$, we define the phase function
\begin{align*}
    \Phi(\xi,\eta)&\triangleq\sqrt{\left|\xi\right|^2+1}-\sqrt{2\left|\xi-\eta\right|^2+4}+\sqrt{\left|\eta\right|^2+1} \\
    &=\frac{1}{2}\left\langle \xi,\eta\right\rangle+O\left(\left|\xi\right|^4+\left|\eta\right|^4\right). \tag{2.2}\label{2.2}
\end{align*}
Let $\eta=2^{-\lambda l}\,\eta^\prime$, $\xi=2^{-3l}\,\xi^\prime$, and we can rewrite
\begin{align*}
    \Phi(\xi,\eta)&=\Phi(2^{-3l}\xi^\prime, 2^{-\lambda l}\eta^\prime) \\
    &=2^{-4\lambda l}\left[2^{4\lambda l}\left(\sqrt{1+\left|2^{-3l}\xi^\prime\right|^2}-\sqrt{2\left|2^{-3l}\xi^\prime-2^{-\lambda l}\eta^\prime\right|^2+4}+\sqrt{\left|2^{-\lambda l}\eta^\prime\right|^2+1}\right)\right] \\
    &\triangleq 2^{-4\lambda l}\,\widetilde{\Phi_\lambda}(\xi^\prime,\eta^\prime).\tag{2.3}\label{2.3}
\end{align*}
For simplicity, we just write $\widetilde{\Phi}\triangleq\widetilde{\Phi_1}$. In the following lemmas, we will study the zeros and critical points of the rescaled phase function $\widetilde{\Phi_\lambda}$.
\vspace{0.8em}
\begin{lemma}
    Fix $\delta=0.01\ll 1$. Assume $\left|\xi\right|\sim 2^{-3l}$ and $\left|\eta\right|\sim 2^{-\lambda l}$, where $0\le \lambda\le 1-\delta$. When $l> D\ge 1000\gg 1$, $\nabla_{\eta^\prime}\widetilde{\Phi_\lambda}(\xi^\prime,\eta^\prime)\neq 0$.
\end{lemma}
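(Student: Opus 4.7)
The plan is to reduce the claim on $\widetilde{\Phi_\lambda}$ to a nonvanishing statement about the unrescaled gradient $\nabla_\eta \Phi$ in the original variables. By (\ref{2.3}) and the chain rule,
\[
\nabla_{\eta'}\widetilde{\Phi_\lambda}(\xi',\eta') \;=\; 2^{3\lambda l}\,(\nabla_\eta\Phi)(\xi,\eta), \qquad \xi = 2^{-3l}\xi',\ \eta = 2^{-\lambda l}\eta',
\]
so it is enough to prove $(\nabla_\eta\Phi)(\xi,\eta)\neq 0$ whenever $|\xi|\sim 2^{-3l}$ and $|\eta|\sim 2^{-\lambda l}$ with $0\le\lambda\le 1-\delta$.

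Computing directly from the definition of $\Phi$,
\[
\nabla_\eta\Phi(\xi,\eta) \;=\; \frac{2(\xi-\eta)}{\sqrt{2|\xi-\eta|^2+4}} + \frac{\eta}{\sqrt{|\eta|^2+1}},
\]
I would first freeze $\xi=0$. A short simplification gives
\[
\nabla_\eta\Phi(0,\eta) \;=\; \eta\left[\frac{1}{\sqrt{|\eta|^2+1}} - \frac{1}{\sqrt{|\eta|^2/2+1}}\right],
\]
and rationalizing the bracket shows that, for $|\eta|\lesssim 1$, its modulus is of order $|\eta|^2$ (with an explicit absolute constant), so $|\nabla_\eta\Phi(0,\eta)|\gtrsim |\eta|^3$.

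Next, I would estimate the $\xi$-perturbation. Writing $F(x)=2x/\sqrt{2|x|^2+4}$, a uniform bound on $|\nabla F|$ on $\mathbb{R}^2$ combined with the mean value theorem yields
\[
|\nabla_\eta\Phi(\xi,\eta) - \nabla_\eta\Phi(0,\eta)| \;=\; |F(\xi-\eta)-F(-\eta)| \;\lesssim\; |\xi|.
\]
Hence $|\nabla_\eta\Phi(\xi,\eta)|\gtrsim |\eta|^3 - C|\xi|$. Since $\lambda\le 1-\delta$,
\[
\frac{|\xi|}{|\eta|^3} \;\lesssim\; 2^{3(\lambda-1)l} \;\le\; 2^{-3\delta l},
\]
which for $l>D\ge 1000$ is so small that the $C|\xi|$ correction is completely dominated by the $|\eta|^3$ main term. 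This forces $\nabla_\eta\Phi(\xi,\eta)\neq 0$, and by the scaling identity above, $\nabla_{\eta'}\widetilde{\Phi_\lambda}(\xi',\eta')\neq 0$ as required.

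The main obstacle I anticipate is establishing a \emph{quantitative} lower bound on $|\nabla_\eta\Phi(0,\eta)|$ that is uniform across the full range $0\le\lambda\le 1-\delta$. In particular, when $\lambda$ is close to $0$ and $|\eta|$ is merely bounded, the Taylor expansion (\ref{2.2}) by itself is inadequate because the quintic remainder is of the same order as the cubic term it would approximate; the explicit rationalization of the scalar bracket above is what makes the bound genuinely uniform.
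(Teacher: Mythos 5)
Your argument is correct, and its skeleton is the same as the paper's: treat $\xi$ as a perturbation, bound the perturbation of the gradient by $|\xi|$ via a Lipschitz estimate on $x\mapsto 2x/\sqrt{2|x|^2+4}$, lower-bound the unperturbed gradient by (a constant times) $|\eta|^3$, and win because $|\xi|\sim 2^{-3l}\ll 2^{-3\lambda l}\sim|\eta|^3$ when $\lambda\le 1-\delta$. The differences are in execution, and they genuinely matter. First, the paper argues by contradiction, deduces $\xi\parallel\eta$ from the hypothetical vanishing of the gradient, and only then reduces to a scalar equation in one variable; you instead apply the mean value theorem to the vector-valued map $F$ on $\mathbb{R}^2$ directly, which removes the collinearity step altogether. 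Second, and more substantively, the paper's lower bound on the scalar function $h(b)=-2b/\sqrt{2b^2+4}+b/\sqrt{b^2+1}$ is obtained by a case split ($\lambda l\le 2$ handled numerically, $\lambda l\ge 2$ by a Lagrange-form Taylor remainder, which in turn requires checking the sign of $h^{(4)}$ to control $h'''$ on the relevant interval); your rationalization
\[
\frac{1}{\sqrt{r+1}}-\frac{1}{\sqrt{r/2+1}}=\frac{-r/2}{\bigl(\sqrt{r+1}+\sqrt{r/2+1}\bigr)\sqrt{(r+1)(r/2+1)}},\qquad r=|\eta|^2,
\]
gives the bound $|\nabla_\eta\Phi(0,\eta)|\approx|\eta|^3$ with explicit constants, uniformly over the whole range $|\eta|\lesssim 1$, in one line and with no derivative bookkeeping. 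Your closing remark correctly identifies why the naive use of the expansion (\ref{2.2}) fails near $\lambda=0$; the rationalization is exactly the right fix, and on balance your route is shorter and more robust than the paper's.
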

\begin{proof}
We first calculate the derivative
\begin{align*}
    \nabla_{\eta^\prime}\widetilde{\Phi_\lambda}(\xi^\prime,\eta^\prime)=2^{4\lambda l}\left(\frac{2\cdot 2^{-\lambda l}\left(2^{-3l}\xi^\prime-2^{-\lambda l}\eta^\prime\right)}{\sqrt{2\left|2^{-3l}\xi^\prime-2^{-\lambda l}\eta^\prime\right|^2+4}}+\frac{2^{-2\lambda l}\eta^\prime}{\sqrt{\left|2^{-\lambda l}\eta^\prime\right|^2+1}}\right).
\end{align*}
We prove by contradiction. If the claim is not true, then there exists $\xi^\prime, \eta^\prime$, such that $\left|\xi^\prime\right|,\left|\eta^\prime\right|\sim 1$ and 
$$\frac{2\cdot 2^{-\lambda l}\left(2^{-3l}\xi^\prime-2^{-\lambda l}\eta^\prime\right)}{\sqrt{2\left|2^{-3l}\xi^\prime-2^{-\lambda l}\eta^\prime\right|^2+4}}+\frac{2^{-2\lambda l}\eta^\prime}{\sqrt{\left|2^{-\lambda l}\eta^\prime\right|^2+1}}=0.$$
We next go back to $\xi,\eta$ variables
$$\frac{2\left(\xi-\eta\right)}{\sqrt{2\left|\xi-\eta\right|^2+4}}+\frac{\eta}{\sqrt{\left|\eta\right|^2+1}}=0.$$
Then, we see that $\xi\parallel\eta$. Therefore, we may suppose $\xi=(a,0)$ and $\eta=(b,0)$, where $\left|a\right|\sim 2^{-3l}$ and $\left|b\right|\sim 2^{-\lambda l}$. Now we only need to consider
\begin{align*}
    \frac{2\left(a-b\right)}{\sqrt{2(a-b)^2+4}}+\frac{b}{\sqrt{b^2+1}}=0.\tag{2.4}\label{2.4}
\end{align*}
Let $g(b)=\displaystyle{\frac{2b}{\sqrt{2b^2+4}}}$, then we see that $g^\prime(b)=\displaystyle{\frac{2\sqrt{2}}{\left(b^2+2\right)^{\frac{3}{2}}}}$. Therefore, $\left|g^\prime(b)\right|\le 1$, which implies that 
$\left|g(a-b)-g(-b)\right|\le \left|a\right|$. Then, (\ref{2.4}) gives
\begin{align*}
    \left|-\frac{2b}{\sqrt{2b^2+4}}+\frac{b}{\sqrt{b^2+1}}\right|\le\left|a\right|.\tag{2.5}\label{2.5}
\end{align*}
Let $\displaystyle{h(b)=-\frac{2b}{\sqrt{2b^2+4}}+\frac{b}{\sqrt{b^2+1}}\le 0}$. By some computation, we get that $h^\prime(0)=h^{\prime\prime}(0)=0$, 
$$h^\prime(b)=-\frac{2\sqrt{2}}{(b^2+2)^{\frac{3}{2}}}+\frac{1}{(b^2+1)^{\frac{3}{2}}}\le 0$$
$$h^{\prime\prime\prime}(b)=-\frac{24(2b^2-1)}{\sqrt{2}(b^2+2)^{\frac{7}{2}}}-\frac{3(-4b^2+1)}{(b^2+1)^{\frac{7}{2}}}$$
and
$$h^{(4)}(b)=-\frac{60\sqrt{2}(2b^2-3)}{(b^2+2)^{\frac{9}{2}}}+\frac{15b(-4b^2+3)}{(b^2+1)^{\frac{9}{2}}}.$$
Note\vspace{0.4em} that if $0\le b\le 0.3$, then $h^{(4)}(b)\ge 0$ since two terms of $h^{(4)}$ are both positive, which implies that $h^{\prime\prime\prime}$ is increasing when $0\le b\le 0.3$. Also note that $h^{\prime\prime\prime}(0)=-1.5$, $h$ is an odd function and $h^{\prime\prime\prime}$ is an even function. Now, we consider the following two cases.\par
If $\lambda l\le 2$, then $\min\limits_{\left|b\right|\ge 0.55\times 2^{-2}}\left|h(b)\right|=-h(0.55\times 2^{-2})> 0.0006$. However, we also \vspace{0.4em} that $\left|a\right|\le 1.2\times 2^{-3l}\le 1.2 \times 2^{-3000}\ll 0.0006$, which leads to a contradiction\vspace{0.4em} in view of (\ref{2.5}).\par
On the other hand, if $\lambda l\ge 2$, then $\min\limits_{\left|b\right|\sim 2^{-\lambda l}}\left|h^{\prime\prime\prime}(b)\right|=-h^{\prime\prime\prime}(1.2\times 2^{-2})=-h^{\prime\prime\prime}(0.3)\ge 1$. By Taylor expansion near $b=0$ and using the remainders of Lagrange form, we can write $\restr{h}{\left(-0.3,0.3\right)}(b)=\frac{h^{\prime\prime\prime}(\Tilde{b})}{6}b^3$, where $\Tilde{b}$ is a number between $0$ and $b$. Therefore, we get
\begin{align*}
   \min\limits_{\left|b\right|\sim 2^{-\lambda l}}\left|h(b)\right|&=-h(0.55\times 2^{-\lambda l})\ge \frac{1}{6}\cdot (0.55\times 2^{-\lambda l})^3\ge 0.027\cdot 2^{-3\lambda l}\\
   &>0.027\cdot 2^{30}\cdot 2^{-3l}\gg 1.201\cdot 2^{-3l}> \left|a\right|,
\end{align*}
which again leads to a contradiction in view of (\ref{2.5}).\par
Our proof is finished.
\end{proof}
\vspace{0.8em}
\begin{cor}
    Fix $\delta=0.01\ll 1$. Assume $\left|\xi\right|\sim 2^{-3l}$ and $\left|\eta\right|\ge 2^{-l+\delta l}$. When $l> D\ge 1000\gg 1$, $\nabla_{\eta^\prime}\widetilde{\Phi}(\xi^\prime,\eta^\prime)\neq 0$.
\end{cor}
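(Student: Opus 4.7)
The plan is to reduce the corollary to Lemma 2.1 through an elementary rescaling and a dyadic decomposition of $|\eta|$. First, by applying the chain rule to
$$\widetilde{\Phi}(\xi',\eta') = 2^{4l}\Phi(2^{-3l}\xi', 2^{-l}\eta'),$$
one obtains $\nabla_{\eta'}\widetilde{\Phi}(\xi',\eta') = 2^{3l}(\nabla_\eta\Phi)(\xi,\eta)$, so the corollary becomes the coordinate-free claim that $\nabla_\eta\Phi(\xi,\eta)\neq 0$ whenever $|\xi|\sim 2^{-3l}$ and $|\eta|\ge 2^{-(1-\delta)l}$. The same chain rule gives $\nabla_{\eta'}\widetilde{\Phi}_\lambda = 2^{3\lambda l}(\nabla_\eta\Phi)$ for any $\lambda$, so Lemma 2.1 already gives exactly this coordinate-free nonvanishing in the range it covers.

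I would then dyadically write $|\eta|\sim 2^{-\lambda l}$, for which the hypothesis forces $\lambda\le 1-\delta$. In the regime $0\le\lambda\le 1-\delta$, Lemma 2.1 with this $\lambda$ directly yields $\nabla_\eta\Phi(\xi,\eta)\neq 0$. If instead $\lambda<0$, i.e., $|\eta|\gtrsim 1$, the statement of Lemma 2.1 does not apply, but its proof applies verbatim: $\nabla_\eta\Phi=0$ still forces $\xi\parallel\eta$, and setting $\xi=(a,0)$, $\eta=(b,0)$ reduces the question to inequality (2.5). Since $|h(b)|$ is monotonically increasing in $|b|$ for $b\ge 0$ and already exceeds $0.0006$ at $|b|=0.55\times 2^{-2}$, I would conclude $|h(b)|\ge 0.0006\gg 1.2\cdot 2^{-3l}\ge |a|$ for $l\ge 1000$, yielding the same contradiction as in the lemma.

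The only (minor) obstacle is thus the extension beyond the range $\lambda\in[0,1-\delta]$ of Lemma 2.1 to arbitrarily large $|\eta|$; but since this regime is handled by exactly the ``$\lambda l\le 2$'' branch of the lemma's proof (which uses only the lower bound $|b|\ge 0.55\times 2^{-2}$ and the monotonicity of $|h|$, not any upper bound), no genuinely new ingredient is required.
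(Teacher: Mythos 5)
Your proposal is correct and is essentially the paper's own argument: the paper's proof of Corollary 2.2 is literally ``Same as Lemma 2.1,'' and your reduction via $\nabla_{\eta'}\widetilde{\Phi}=2^{3l}\nabla_\eta\Phi$ together with the dyadic decomposition in $|\eta|$ is exactly how that reuse is meant to work. Your observation that the regime $|\eta|\gtrsim 1$ falls outside the literal hypotheses of Lemma 2.1 but is covered by the ``$\lambda l\le 2$'' branch of its proof (using only the monotonicity of $|h|$ and the lower bound $|h(0.55\times 2^{-2})|>0.0006$) is a correct and worthwhile clarification of a detail the paper leaves implicit.
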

\begin{proof}
    Same as Lemma 2.1.
\end{proof}
\vspace{0.8em}
\begin{cor}
Let $l>D\gg 1$. Assume $\left|\xi\right|\sim 2^{-3l}$, then there exists a unique $\eta$ such that $\left|\eta\right|\sim 2^{-l}$ and $\nabla_{\eta^\prime}\widetilde{\Phi}(\xi^\prime,\eta^\prime)=0$. Moreover, we have $\widetilde{\Phi}(\xi^\prime,\eta^\prime)\neq 0$.
\end{cor}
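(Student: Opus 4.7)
My plan is to reduce the critical point equation to a one-dimensional problem via the parallelism argument from Lemma 2.1, solve it using the intermediate value theorem together with monotonicity, and finally verify $\widetilde{\Phi} \ne 0$ by Taylor-expanding $\Phi$ to fourth order at the critical point.

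First, exactly as in the proof of Lemma 2.1, setting $\nabla_{\eta'}\widetilde{\Phi}(\xi',\eta')=0$ and returning to the original $(\xi,\eta)$ variables forces $\xi-\eta$ and $\eta$ to be anti-parallel, hence $\xi\parallel\eta$. By the rotational symmetry of $\Phi$ I may take $\xi=(a,0)$ with $a>0$ and $\eta=(b,0)$, so the critical point condition reduces to (\ref{2.4}). A quick sign check rules out $b\le 0$ (both summands have the same sign there), so any critical point sits at some $b>0$.

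Setting $F(b):=g(a-b)+g_1(b)$ with $g(x)=2x/\sqrt{2x^2+4}$ and $g_1(x)=x/\sqrt{x^2+1}$, the Taylor expansions $g(x)=x-x^3/4+O(x^5)$ and $g_1(x)=x-x^3/2+O(x^5)$ give $F(b)=a-b^3/4+O(|a|^3+|a|b^2+b^5)$ for $|a|\ll|b|\ll 1$. So a root exists near $b_\star\approx (4a)^{1/3}\sim 2^{-l}$ by IVT. For uniqueness in the range $|b|\sim 2^{-l}$, differentiating yields $F'(b)=-3b^2/4+O(|a|b+b^4)<0$ throughout this range, so $F$ is strictly monotone there. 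Combined with Lemma 2.1 and Corollary 2.2, which rule out critical points at all other scales of $|\eta|$, this produces the unique critical point claimed.

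For $\widetilde{\Phi}\ne 0$ at this critical point, I will Taylor-expand each square root in $\Phi$ to fourth order; after collecting, one obtains
\begin{equation*}
\Phi(\xi,\eta)=ab+\frac{-a^4-4a^3b+6a^2b^2-4ab^3-b^4}{16}+O(|\xi|^6+|\eta|^6).
\end{equation*}
Substituting the critical point relation $b^3=4a+O(a^{5/3})$ converts the dominant quartic contribution into $-b^4/16=-ab/4+O(a^{7/3})$, giving $\Phi(\xi,\eta)=3ab/4+O(a^2)$. Rescaling by $2^{4l}$ then yields $\widetilde{\Phi}(\xi',\eta')\sim 1\ne 0$.

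The main obstacle is this last step: both the leading quadratic term $\xi\cdot\eta$ and the dominant quartic term $-|\eta|^4/16$ of $\Phi$ contribute at size $2^{-4l}$ at the critical point, so one cannot dismiss the quartic as a smaller remainder. The critical point equation has to be fed back into the quartic, and only the partial cancellation $ab-ab/4=3ab/4$ survives. Tracking this cancellation carefully and confirming that the residue is nonzero is the substantive content of the argument; the direct-search structure (reduce to 1D, use IVT plus monotonicity) is routine given Lemma 2.1 and Corollary 2.2.
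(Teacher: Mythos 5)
Your proposal is correct, and on the second claim it is genuinely more careful than the paper's own argument. For existence and uniqueness the two routes are essentially equivalent: the paper solves the one-dimensional critical point equation explicitly as $a=f(b)=b-\sqrt{2b^2/(b^2+2)}$ and uses strict monotonicity of $f$ plus the expansion $f(b)=b^3/4+O(b^5)$, whereas you keep the equation in the form $F(b)=g(a-b)+g_1(b)=0$ and use IVT plus $F'(b)=-3b^2/4+O(\cdot)<0$; same content, different packaging. The real divergence is in proving $\widetilde{\Phi}\neq 0$. The paper argues that $\Phi(\xi,\eta)\approx\langle\xi,\eta\rangle\sim 2^{-4l}\neq 0$, silently discarding the $O(|\xi|^4+|\eta|^4)$ remainder of (\ref{2.2}) — but at the critical point $|\eta|^4\sim 2^{-4l}$ is exactly the size of the main term, so this step is not justified as written. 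You correctly identify this as the crux, carry the expansion to fourth order, feed the critical point relation $b^3=4a+O(a^{5/3})$ back into the quartic term $-b^4/16=-ab/4+\cdots$, and find that the cancellation is only partial, leaving $\tfrac34 ab\neq 0$ (your quadratic coefficient $1$ for $\langle\xi,\eta\rangle$ is the correct one — (\ref{2.2})'s stated $\tfrac12$ appears to be a typo — and with either value the residue is nonzero). This is the argument the paper actually needs.

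Two small repairs. First, your sign check for $b\le 0$ is misstated: for $a>0$ and $b<0$ the two summands $g(a-b)>0$ and $g_1(b)<0$ have \emph{opposite} signs. The correct reason no root exists there is domination: $g(x)=x/\sqrt{x^2/2+1}>x/\sqrt{x^2+1}=g_1(x)$ for $x>0$ and $g$ is increasing, so $F(b)=g(a+|b|)-g_1(|b|)\ge g(|b|)-g_1(|b|)>0$. Second, Lemma 2.1 and Corollary 2.2 only exclude critical points with $|\eta|\gtrsim 2^{-(1-\delta)l}$, not all other scales; but since the corollary only asserts uniqueness within $|\eta|\sim 2^{-l}$, your monotonicity of $F$ on that range already suffices and the appeal to those results is not needed.
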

\begin{proof}
We observe that 
$$\nabla_{\eta^\prime}\widetilde{\Phi}(\xi^\prime,\eta^\prime)=0\ \ \ \iff \ \ \ \nabla_\eta\Phi(\xi,\eta)=0.$$
So we only need to consider
$$\nabla_\eta\Phi(\xi,\eta)=\frac{2\left(\xi-\eta\right)}{\sqrt{2\left|\xi-\eta\right|^2+4}}+\frac{\eta}{\sqrt{\left|\eta\right|^2+1}}=0.$$
Then, we again see that $\xi\parallel\eta$. Therefore, we may suppose $\xi=(a,0)$ and $\eta=(b,0)$, where $\left|a\right|\sim 2^{-3l}$. Now we only need to consider
\begin{align*}
    \frac{2\left(a-b\right)}{\sqrt{2(a-b)^2+4}}+\frac{b}{\sqrt{b^2+1}}=0.
\end{align*}
WLOG, we may assume $a>0$. Then by some elementary computation, we get that
\begin{align*}
    a=b-\sqrt{\frac{2b^2}{b^2+2}}.\tag{2.6}\label{2.6}
\end{align*}
If $b>0.001$, then denote $f(b)=b-\sqrt{\frac{2b^2}{b^2+2}}$ and we get $f^\prime(b)=1-\frac{2\sqrt{2}}{(b^2+2)^{\frac{2}{3}}}> 0$ . This means that $a=f(b)$ is a strictly increasing function of $b$, so $a\gg 2^{-3l}$ (since we've assumed $l>D\gg 1$). Therefore we must have $0<b\ll 1$, which implies that we can do the Taylor expansion of $f(b)$ around $b=0$. Now, (\ref{2.6}) gives that
$$a=b-\left(b+O\left(b^3\right)\right)=O\left(b^3\right).$$
This means that there exists a $\eta$ such that (\ref{2.6}) holds and $\left|\eta\right|\sim 2^{-l}$.\par
Next, we show that $\widetilde{\Phi}(\xi^\prime,\eta^\prime)\neq 0$. If not, then we have $\widetilde{\Phi}(\xi^\prime,\eta^\prime)=0$, which implies that $\Phi(\xi,\eta)=0$. However, when $\left|\xi\right|,\left|\eta\right|\ll 1$, we have $\Phi(\xi,\eta)\approx\left\langle\xi,\eta\right\rangle=\left|\xi\right|\cdot\left|\eta\right|=2^{-4l}\neq 0$ (note that $\xi\parallel\eta$), which is a contradiction. 
\end{proof}
\vspace{1em}
\section{Proof of the Main Theorem}
In this chapter, we will iterate twice and extract the main part to prove the main theorem.\par
Define $\widehat{g_0}(\xi)=\varepsilon\cdot \varphi_{\left(-\infty,0\right]}(\xi)$, which gives us that $\left\|g_0\right\|_{L^2}\lesssim \varepsilon$. Moreover, we see that if $\left|\xi\right|\le 1.1$, then $\widehat{g_0}\equiv 1$ and if $\left|\xi\right|\ge 1.2$, then $\widehat{g_0}\equiv 0$. Now take the initial value of the system (\ref{1.1}) to be
\begin{align*}
    \textbf{u}_0(x)=\begin{pmatrix}
        g_0 (x) \\
        g_0 (x) \\
        g_0 (x) \\
        0 \\
        0 \\
        0
    \end{pmatrix}.
\end{align*}
In the following proof, we fix $D\gg 1$. For example, $D=10000$ works in view of the following proof. \par
First, we consider the first iteration. In Proposition 3.1, we assume the frequency of $\widehat{f_4}$ (and $\widehat{f_5}$) is very low. We will find the pointwise size of $\widehat{P_{-3l}f_4}$ (and $\widehat{P_{-3l}f_5}$), and point out that the main contribution would occur at $\left|\xi\right|\sim 2^{-3l}$, $\left|\eta\right|\sim 2^{-l}$ and $t\sim 2^{4l}$.
\vspace{0.8em}
\begin{prop}
    For any $l\ge D\ge 0$, we fix $l$ and consider the function $\widehat{P_{-3l} f_4}(\xi,t)$. Then, we have the following\par
    (a) For any $t\ge 2^{4.05l}$ and $\left|\xi\right|\sim 2^{-3l}$, $\left|\widehat{P_{-3l} f_4}(\xi,t)\right|\approx 2^{2l}$. Moreover, we can write 
    $$\widehat{P_{-3l}f_4}(\xi,t)\approx\reallywidehat{\mathcal{B}}_{\left[3.99l,4.05l\right],\left[-1.03l,-0.99l\right]}+e(\xi,t)\approx 2^{2l}+e(\xi,t),$$
    where $\left|e(\xi,t)\right|\lesssim 2^{1.997l}$.\par
    (b) For any $t\le 2^{4.05l}$ and $\left|\xi\right|\sim 2^{-3l}$, $\left|\widehat{P_{-3l} f_4}(\xi,t)\right|\lesssim 2^{2l}$.
\end{prop}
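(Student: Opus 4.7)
The plan is to begin from the Duhamel formula for $(\partial_t+i\Lambda_1)u_4=u_2\overline{u_1}$. Since $u_1,u_2$ solve homogeneous equations with data $g_0$, the profiles $f_1,f_2$ are time-independent with $\widehat{f_1}=\widehat{f_2}=\widehat{g_0}$, and since $\widehat{g_0}$ is real and radial we obtain
$$\widehat{P_{-3l}f_4}(\xi,t)=\varphi_{-3l}(\xi)\int_0^t\!\!\int_{\mathbb{R}^2}e^{is\Phi(\xi,\eta)}\,\widehat{g_0}(\xi-\eta)\,\widehat{g_0}(\eta)\,d\eta\,ds,$$
with $\Phi$ as in (2.2). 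The guiding observation is the rescaling (2.3): writing $\xi=2^{-3l}\xi'$, $\eta=2^{-l}\eta'$, $s=2^{4l}s'$ yields a Jacobian $2^{-2l}\cdot 2^{4l}=2^{2l}$, so the integral restricted to the window $|\eta|\sim 2^{-l}$, $s\sim 2^{4l}$ becomes $\varepsilon^2\cdot 2^{2l}\int\!\!\int e^{is'\widetilde{\Phi}(\xi',\eta')}\,d\eta'\,ds'$. The natural order on this window is thus $\approx 2^{2l}$, and my strategy is to dyadically decompose both the $s$- and $\eta$-integrations, designate the window $s\in[2^{3.99l},2^{4.05l}]$ with $|\eta|\in[2^{-1.03l},2^{-0.99l}]$ as $\widehat{\mathcal{B}}_{[3.99l,4.05l],[-1.03l,-0.99l]}$, and show that all complementary contributions sum to at most $2^{1.997l}$.

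For the large-$\eta$ tail $|\eta|\ge 2^{-0.99l}$, Lemma~2.1 and Corollary~2.2 combined with (2.3) give $|\nabla_\eta\Phi|\gtrsim 2^{-3(1-\delta)l}$ with $\delta=0.01$; one then splits the time at $s_*=2^{3(1-\delta)l}$, bounding $s\in[0,s_*]$ trivially by $s_*\cdot 2^{-2(1-\delta)l}\cdot\varepsilon^2\lesssim 2^{(1-\delta)l}\varepsilon^2$ and iterating integration by parts in $\eta$ for $s\ge s_*$ to obtain arbitrary gains $(s\cdot 2^{-3(1-\delta)l})^{-N}$; summing dyadically over $|\eta|$ keeps this well below $2^{1.997l}$. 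For the small-$\eta$ tail $|\eta|\le 2^{-1.03l}$, the trivial bound $\varepsilon^2\cdot t\cdot 2^{-2.06l}\lesssim\varepsilon^2\cdot 2^{1.99l}$ handles $t\le 2^{4.05l}$ (and immediately yields part (b) in this range); for larger $t$ one first integrates by parts once in $s$ to trade the factor $t$ for $1/|\Phi|$, then uses the explicit expansion $\Phi\approx\tfrac12\langle\xi,\eta\rangle-\tfrac1{16}|\eta|^4$ together with a dyadic decomposition of $\eta$ by $|\Phi|$ to evaluate $\int|\Phi|^{-1}\,d\eta$, which gives $\lesssim 2^{1.97l}$. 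Within the main $\eta$-annulus the time tail $s\le 2^{3.99l}$ is absorbed into the error by $2^{3.99l}\cdot 2^{-2l}\cdot\varepsilon^2\lesssim 2^{1.99l}$, while $s\ge 2^{4.05l}$ is controlled by $s$-integration by parts combined with Corollary~2.3, which keeps $|\widetilde{\Phi}|$ bounded from below away from its zero set inside the annulus.

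What remains is the main box $\widehat{\mathcal{B}}_{[3.99l,4.05l],[-1.03l,-0.99l]}$, on which $\widehat{g_0}(\xi-\eta)\widehat{g_0}(\eta)=\varepsilon^2$. After rescaling, this equals $\varepsilon^2\cdot 2^{2l}$ times a bounded oscillatory integral over $s'\in[2^{-0.01l},2^{0.05l}]$ and $|\eta'|\sim 1$; applying stationary phase in $\eta'$ about the unique nondegenerate critical point supplied by Corollary~2.3 (at which $\widetilde{\Phi}\ne 0$) isolates a nonvanishing principal term of exact order $\approx\varepsilon^2\cdot 2^{2l}$, which is the asserted $\widehat{\mathcal{B}}$. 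Combining this main contribution with the error estimates above yields both parts (a) and (b). The principal obstacle will be securing the matching lower bound $|\widehat{P_{-3l}f_4}(\xi,t)|\gtrsim 2^{2l}$ in part (a): one must verify that the stationary-phase main term does not accidentally cancel as $s'$ varies over its interval, and this is exactly where the nonvanishing of $\widetilde{\Phi}$ at the critical point, guaranteed by Corollary~2.3, is decisive.
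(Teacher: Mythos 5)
Your overall architecture is the same as the paper's: the identical rescaling $\xi=2^{-3l}\xi'$, $\eta=2^{-l}\eta'$, $s=2^{4l}s'$, the same box $\widehat{\mathcal{B}}_{[3.99l,4.05l],[-1.03l,-0.99l]}$ as main term, Lemma 2.1 for the large-$\eta$ tail, and Corollary 2.3 plus stationary phase for large rescaled time. But two of your error estimates fail as written. First, the time tail $s\le 2^{3.99l}$ on the designated annulus: that annulus $2^{-1.03l}\le|\eta|\le 2^{-0.99l}$ has area $\approx\pi\,2^{-1.98l}$, not $2^{-2l}$, so your trivial bound is $2^{3.99l}\cdot 2^{-1.98l}=2^{2.01l}$, which exceeds the main term $2^{2l}$ and is not an admissible error. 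You must use oscillation in $\eta$ even for these moderately small times; the paper does this by bounding $G(s')=\int e^{is'\widetilde{\Phi}}d\eta'$ for $s'\le 2^{-0.01l}$ via the split $|\eta'|\le(s')^{-1/3-0.01}$ (volume count) versus $|\eta'|\ge(s')^{-1/3-0.01}$ (non-stationary phase, since $|s'\nabla_{\eta'}\widetilde{\Phi}|\gtrsim s'|\eta'|^3\ge 2^{0.03\lambda l}$ there), giving $|G(s')|\lesssim(s')^{-2/3-0.02}$ and hence $\int_0^{2^{-0.01l}}|G|\lesssim 2^{-0.003l}$, i.e.\ an error $2^{1.997l}$. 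The same mechanism, not a volume count, is what gives part (b) in the range $2^l<t\le 2^{4l}$ (the paper's bound $2^{\frac{2}{3}l}t^{\frac13}\lesssim 2^{2l}$); your claim that the small-$\eta$ trivial bound ``immediately yields part (b)'' ignores the contribution of $|\eta|\ge 2^{-1.03l}$.

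Second, your treatment of the small-$\eta$ tail at large times is broken: $\int_{|\eta|\le 2^{-1.03l}}|\Phi|^{-1}\,d\eta$ diverges. Indeed $\Phi(\xi,0)=-\tfrac{1}{16}|\xi|^4+O(|\xi|^6)<0$ while $\Phi>0$ once $\langle\xi,\eta\rangle\gtrsim|\xi|^4$, so the zero set of $\Phi$ is a curve crossing the ball $\{|\eta|\le 2^{-1.03l}\}$, across which $1/|\Phi|$ is not locally integrable. Replacing $1/|\Phi|$ by $\min(t,1/|\Phi|)$ costs a factor of order $\log_2 t-4l$ (one $2^{1.97l}$ per dyadic level of $|\Phi|$ between $1/t$ and $2^{-4l}$), which is not uniformly $O(2^{0.027l})$ in $t$, so the bound $2^{1.997l}$ is lost for very large $t$. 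The uniform-in-$t$ argument is again non-stationary phase in $\eta$: after rescaling, $\nabla_{\eta'}\widetilde{\Phi}=\tfrac12\xi'+O(|\eta'|^3)$ has size $\sim1$ on $|\eta'|\ll1$, so repeated integration by parts in $\eta'$ yields $O((s')^{-N})$, integrable in $s'$ independently of the upper limit. A smaller point of the same kind: your single time-split at $s_*=2^{3(1-\delta)l}$ for the whole region $|\eta|\ge 2^{-0.99l}$ uses the area $2^{-2(1-\delta)l}$ of the worst shell, whereas the full region has area $O(1)$; this must be done shell by shell with $s_*$ depending on the shell, as you hint and as the paper does. With these repairs your plan coincides with the paper's proof; the one genuinely soft point that remains --- that $\int_0^\infty G(s')\,ds'$ is a nonzero constant --- is asserted rather than proved in the paper as well, so you are not behind it there.
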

\begin{proof}
\ \par
(a) In this proof, we set $\xi^\prime=2^{3l}\xi$ and $t\sim 2^m$. The main observation in our proof is the main term of the Duhamel's formula of $\widehat{P_{-3l}f_4}(\xi,t)$ is $\reallywidehat{\mathcal{B}}_{\left[3.99l,4.05l\right],\left[-1.03l,-0.99l\right]}$.\par
In fact, we can write
\begin{align*}
    &\reallywidehat{\mathcal{B}}_{\left[3.99l,4.05l\right],\left[-1.03l,-0.99l\right]}\\
    =&\reallywidehat{\mathcal{B}}_{\left[3.99l,4.05l\right],\left[-1.03l,-0.99l\right]}(2^{-3l}\xi^\prime,t)=\reallywidehat{\mathcal{B}}_{\left[3.99l,4.05l\right],\left[-1.03l,-0.99l\right]}(\xi,t)\\
    \triangleq&\int_{2^{3.99l}}^{2^{4.05l}}\int_{\mathbb{R}^2} e^{is\Phi(\xi,\eta)} \widehat{g_0}(\xi-\eta)\widehat{g_0}(\eta)\mathbbm{1}_{\left[-1.03l,-0.99l\right]}(\eta)\,d\eta ds \\
    =&\int_{2^{3.99l}}^{2^{4.05l}}\int_{2^{-1.03l}\le\left|\eta\right|\le 2^{-0.99l}} e^{is\Phi(\xi,\eta)} \,d\eta ds \tag{3.1a}\label{3.1a} \\
    =&\int_{2^{3.99l}}^{2^{4.05l}}\int_{2^{-1.03l}\le\left|\eta\right|\le 2^{-0.99l}} e^{is\Phi(\xi,\eta)}\widehat{g_0}(\xi-\eta)\widehat{g_0}(\eta) \,d\eta ds \tag{3.1b}\label{3.1b}\\
\end{align*}
Now, we consider the contribution of the above integral (\ref{3.1a}) when $\left|\eta\right|\notin\left[-1.03l,-0.99l\right]$. In fact, we have
\begin{align*}
    &\int_{2^{3.99l}}^{2^{4.05l}}\int_{\mathbb{R}^2} e^{is\Phi(\xi,\eta)} \,d\eta ds \\
    =&\int_{2^{3.99l}}^{2^{4.05l}}\int_{\left|\eta\right|\le 2^{-1.03l}} e^{is\Phi(\xi,\eta)} \,d\eta ds+\int_{2^{3.99l}}^{2^{4.05l}}\int_{2^{-1.03l}\le \left|\eta\right|\le 2^{-0.99l}} e^{is\Phi(\xi,\eta)} \,d\eta ds\\
    &\ \ \ \ +\int_{2^{3.99l}}^{2^{4.05l}}\int_{\left|\eta\right|\ge 2^{-0.99l}} e^{is\Phi(\xi,\eta)} \,d\eta ds\triangleq I_1+I_2+I_3.
\end{align*}
As for $I_3$, we can decompose in terms of the length of $\eta$ 
$$I_3=\sum_{k_2>-0.99l}\int_{2^{3.99l}}^{2^{4.05l}}\int_{\left|\eta\right|\sim 2^{k_2}} e^{is\Phi(\xi,\eta)}\,d\eta ds.$$
Then, let $\eta^\prime=2^{\lambda l}\eta$ and $s^\prime=2^{-4l}s$ ($0\le\lambda \le 0.99$) and get
\begin{align*}
    I_3=\sum_{k_2>-0.99l} 2^{4l-2\lambda l} \int_{2^{-0.01l}}^{2^{0.05l}}\int_{\left|\eta^\prime\right|\sim 1}e^{i\left(s^\prime\cdot 2^{(4-4\lambda)l}\right)\widetilde{\Phi_\lambda}(\xi^\prime,\eta^\prime)} \,d\eta^\prime ds^\prime.
\end{align*}
Note that $\left|\nabla_{\eta^\prime}\widehat{g_0}(2^{-3l}\xi^\prime-2^{-\lambda l}\eta^\prime)\right|\sim 2^{-\lambda l}$, $\left|\nabla_{\eta^\prime}\widehat{g_0}(2^{-\lambda l}\eta^\prime)\right|\sim 2^{-\lambda l}$ and $s^\prime\cdot 2^{(4-4\lambda)l}\ge 2^{0.03l}$. 
Lemma 2.1 tells us that $\nabla_{\eta^\prime}\widetilde{\Phi_\lambda}(\xi^\prime,\eta^\prime)\neq 0$, which implies that
\begin{align*}
    I_3&=\sum_{k_2>-0.99l} 2^{4l-2\lambda l} \int_{2^{-0.01l}}^{2^{0.05l}} O\left(\left(s^\prime\cdot 2^{(4-4\lambda)l}\right)^{-N}\right) ds^\prime
\end{align*}
So, by take $N,D$ large enough, we can get 
\begin{align*}
    \left|I_3\right|\le\sum_{k_2>-0.99l} A_N 2^{4l-2\lambda l-0.04Nl+0.05l}=\frac{4}{3} A_N 2^{(4.05-0.04N)l}\le 2^{-10l}
\end{align*}
On the other hand, as for $I_1$, let $\eta^\prime=2^l \eta$ and $s^\prime=2^{-4l}s$ and we get
\begin{align*}
    I_1=2^{2l} \int_{2^{-0.01l}}^{2^{0.05l}} \int_{\left|\eta^\prime\right|\le 2^{-0.03l}}e^{is^\prime\widetilde{\Phi}(\xi^\prime,\eta^\prime)}\,d\eta^\prime ds^\prime.
\end{align*}
So, by volume estimate, we get 
\begin{align*}
    \left|I_1\right|\le \pi\cdot 2^{1.99l}.
\end{align*}
Thus, in view of (\ref{3.1a}), we have
\begin{align*}
    \reallywidehat{\mathcal{B}}_{\left[3.99l,4.05l\right],\left[-1.03l,-0.99l\right]}= \int_{2^{3.99l}}^{2^{4.05l}} \int_{\mathbb{R}^2} e^{is\Phi(\xi,\eta)}\,d\eta ds+O\left(2^{1.99l}\right)
\end{align*}
Let $\eta^\prime=2^l \eta$ and $s^\prime=2^{-4l} s$. Then, we get
\begin{align*}
    \reallywidehat{\mathcal{B}}_{\left[3.99l,4.05l\right],\left[-1.03l,-0.99l\right]}&= 2^{2l}\cdot\int_{2^{-0.01l}}^{2^{0.05l}} \int_{\mathbb{R}^2} e^{is^\prime\widetilde{\Phi}(\xi^\prime,\eta^\prime)} \,d\eta^\prime ds^\prime+O\left(2^{1.99l}\right) \\
    &\triangleq 2^{2l}\cdot\int_{2^{-0.01l}}^{2^{0.05l}}G(s^\prime)\,ds^\prime+O\left(2^{1.99l}\right).\tag{3.2}\label{3.2}
\end{align*}
Now, if $\left|s^\prime\right|\le 2^{-0.01l}$, then we decompose
\begin{align*}
    G(s^\prime)&=\int_{\mathbb{R}^2} e^{is^\prime\widetilde{\Phi}(\xi^\prime,\eta^\prime)} \,d\eta^\prime\\
    &=\int_{\left|\eta^\prime\right|\le\left(s^\prime\right)^{-\frac{1}{3}-0.01}} e^{is^\prime\widetilde{\Phi}(\xi^\prime,\eta^\prime)} \,d\eta^\prime\\
    &\ \ \ +\int_{\left|\eta^\prime\right|\ge\left(s^\prime\right)^{-\frac{1}{3}-0.01}} e^{is^\prime\widetilde{\Phi}(\xi^\prime,\eta^\prime)} \,d\eta^\prime\\
    &\triangleq I_{11}+I_{12}.
\end{align*}
By volume estimate, we see that
\begin{align*}
    \left|I_{11}\right|\le\pi\cdot\left(s^\prime\right)^{-\frac{2}{3}-0.02}.\tag{3.3}\label{3.3}
\end{align*}
In terms of $I_{12}$, we let $s^\prime\sim 2^{-\lambda l}$ ($\lambda\ge 0.01$) and write 
$$I_{12}=\sum_{\frac{\lambda}{3}+0.01\lambda<\Tilde{\lambda}\le 1 \atop \Tilde{\lambda}l\in\mathbb{Z}} \int_{\left|\eta^\prime\right|\sim 2^{\Tilde{\lambda}l}} e^{is^\prime \Tilde{\Phi}(\xi^\prime,\eta^\prime)}\,d\eta^\prime.$$
Then, note that $\nabla_{\eta^\prime}\left[s^\prime \Tilde{\Phi}(\xi^\prime,\eta^\prime)\right]=s^\prime\left(\frac{1}{2}\xi^\prime+O(\left|\eta^\prime\right|^2\eta^\prime)\right)$, which implies that $\left|\nabla_{\eta^\prime}\left[s^\prime \Tilde{\Phi}(\xi^\prime,\eta^\prime)\right]\right|\ge 2^{-\lambda l}\cdot 2^{3\Tilde{\lambda}l}\ge 2^{0.03\lambda l}$. Also note that $\left|\nabla_{\eta^\prime}\widehat{g_0}(2^{-3l}\xi^\prime-2^{-l}\eta^\prime)\right|\sim 2^{-l}$ and  $\left|\nabla_{\eta^\prime}\widehat{g_0}(2^{-l}\eta^\prime)\right|\sim 2^{-l}$. Thus, we can keep doing integration by parts in $\eta^\prime$ such that for all $\frac{\lambda}{3}+0.01\lambda<\Tilde{\lambda}<1$, we have
$$\left|\int_{\left|\eta^\prime\right|\sim 2^{\Tilde{\lambda}l}} e^{is^\prime \Tilde{\Phi}(\xi^\prime,\eta^\prime)}\,d\eta^\prime\right|\le 2^{-11\lambda l}.$$
This gives
\begin{align*}
    \left|I_{12}\right|\le \sum_{\frac{\lambda}{3}+0.01\lambda<\Tilde{\lambda}\le 1 \atop \Tilde{\lambda}l\in\mathbb{Z}} 2^{-11\lambda l} \le \left|l\right|\cdot 2^{-11\lambda l}\le \frac{1}{\lambda}\cdot\left|\lambda l\right|\cdot 2^{-11\lambda l}\le \frac{1}{\lambda}\cdot 2^{-10\lambda l}\le 100\cdot \left(s^\prime\right)^{10}\le \pi \cdot \left(s^\prime\right)^{-\frac{2}{3}-0.02}.\tag{3.4}\label{3.4}
\end{align*}
Combining (\ref{3.3}) and (\ref{3.4}), we conclude
\begin{align*}
    \left|\int_0^{2^{-0.01l}} G(s^\prime)\,ds^\prime\right|\le \pi\cdot\int_0^{2^{-0.01l}} (s^\prime)^{-\frac{2}{3}-0.02}\,ds^\prime=\frac{150\pi}{47}\cdot 2^{-\frac{47}{15000}l}
\end{align*}
On the other hand, we suppose that $\left|s^\prime\right|\ge 2^{0.05l}$. Note that $\widehat{g_0}$ has a compact support. By Corollary 2.3, we know that there exists $\eta^\prime(\xi^\prime)$ such that $\nabla_{\eta^\prime}\Tilde{\Phi}(\xi^\prime,\eta^\prime(\xi^\prime))=0$ and $\left|2^{-l}\eta^\prime(\xi^\prime)\right|\sim 2^{-l}$,, then we can write
\begin{align*}
    G(s^\prime)=C e^{is^\prime \Tilde{\Phi}(\xi^\prime,\eta^\prime(\xi^\prime))}\cdot\frac{1}{s^\prime}+O\left(\frac{1}{(s^\prime)^2}\right).
\end{align*}
Therefore,
\begin{align*}
    \left|\int_{2^{0.05l}}^{+\infty} G(s^\prime)\,ds^\prime\right|&\lesssim \left|\int_{2^{0.05l}}^{+\infty} \frac{e^{is^\prime\Tilde{\Phi}(\xi^\prime,\eta^\prime(\xi^\prime))}}{s^\prime}\,ds^\prime\right|+\int_{2^{0.05l}}^{+\infty} \frac{ds^\prime}{(s^\prime)^2}\\
    &\lesssim \left|\int_{2^{0.05l}}^{+\infty} \frac{e^{is^\prime\Tilde{\Phi}(\xi^\prime,\eta^\prime(\xi^\prime))}}{ (s^\prime)^2}\,ds^\prime\right|+\frac{e^{i\cdot 2^{0.05l}\cdot \Tilde{\Phi}(\xi^\prime,\eta^\prime(\xi^\prime))}}{2^{0.05l}}+2^{-0.05l}\\
    &\lesssim 2^{-0.05l}.\tag{3.5}\label{3.5}
\end{align*}
Finally, in view of (\ref{3.2}), we see that
\begin{align*}
    \reallywidehat{\mathcal{B}}_{\left[3.99l,4.05l\right],\left[-1.03l,-0.99l\right]}=C_\varphi \cdot 2^{2l}\cdot\int_0^{+\infty}G(s^\prime)\,ds^\prime+O\left(2^{1.997l}\right).\tag{3.6a}\label{3.6a}
\end{align*}
Note that
\begin{align*}
    \int_0^{+\infty}G(s^\prime)\,ds^\prime&=\int_0^{+\infty}\int_{\mathbb{R}^2} e^{is^\prime\widetilde{\Phi}(\xi^\prime,\eta^\prime)} \,d\eta^\prime ds^\prime\neq 0
\end{align*}
is a nonzero constant independent of $l$.\par
By a same computation, we can also get
\begin{align*}
    \reallywidehat{\mathcal{B}}_{\left[3.99l,4.05l\right],\left[-1.03l,-0.99l\right]}&=2^{2l}\cdot\int_0^{+\infty}\int_{\mathbb{R}^2} e^{is^\prime\widetilde{\Phi}(\xi^\prime,\eta^\prime)} \widehat{g_0}(2^{-3l}\xi^\prime-2^{-l}\eta^\prime)\widehat{g_0}(2^{-l}\eta^\prime) \,d\eta^\prime ds^\prime\\
    &\ \ \ \ +O\left(2^{1.997l}\right).\tag{3.6b}\label{3.6b}
\end{align*}
Combining (\ref{3.6a}) and (\ref{3.6b}), we can get
\begin{align*}
    \int_0^{+\infty}G(s^\prime)\,ds^\prime=\int_0^{+\infty}\int_{\mathbb{R}^2} e^{is^\prime\widetilde{\Phi}(\xi^\prime,\eta^\prime)} \widehat{g_0}(2^{-3l}\xi^\prime-2^{-l}\eta^\prime)\widehat{g_0}(2^{-l}\eta^\prime) \,d\eta^\prime ds^\prime+O\left(2^{-0.01l}\right).
\end{align*}
Namely, we conclude 
\begin{align*}
    \int_0^{+\infty}G(s^\prime)\,ds^\prime\approx\int_0^{+\infty}\int_{\mathbb{R}^2} e^{is^\prime\widetilde{\Phi}(\xi^\prime,\eta^\prime)} \widehat{g_0}(2^{-3l}\xi^\prime-2^{-l}\eta^\prime)\widehat{g_0}(2^{-l}\eta^\prime) \,d\eta^\prime ds^\prime.
\end{align*}
In addition, in view of (\ref{3.5}), if $m-4l\ge 0.05l$, then we also have $\displaystyle{\left|\int_{2^{0.05l}}^{2^{m-4l}} G(s^\prime)\,ds^\prime\right|\le C_3\cdot 2^{-0.05l}}$, which implies that 
$$\displaystyle{\left|\int_{2^{m-4l}}^{+\infty} G(s^\prime)\,ds^\prime\right|\le C_3\cdot 2^{-0.05l}}.$$
Therefore, we yield that
\begin{align*}
    \int_0^{+\infty}G(s^\prime)\,ds^\prime\approx\int_0^{2^{m-4l}}G(s^\prime)\,ds^\prime;
\end{align*}
similarly, we also have
\begin{align*}
    &\int_0^{+\infty}\int_{\mathbb{R}^2} e^{is^\prime\widetilde{\Phi}(\xi^\prime,\eta^\prime)} \widehat{g_0}(2^{-3l}\xi^\prime-2^{-l}\eta^\prime)\widehat{g_0}(2^{-l}\eta^\prime) \,d\eta^\prime ds^\prime\\
    \approx&\int_0^{2^{m-4l}}\int_{\mathbb{R}^2} e^{is^\prime\widetilde{\Phi}(\xi^\prime,\eta^\prime)} \widehat{g_0}(2^{-3l}\xi^\prime-2^{-l}\eta^\prime)\widehat{g_0}(2^{-l}\eta^\prime) \,d\eta^\prime ds^\prime.
\end{align*}
Now, we compute
\begin{align*}
    \widehat{P_{-3l}f_4}(\xi,t)&=\int_0^{2^m}\int_{\mathbb{R}^2} e^{is\Phi(\xi,\eta)} \widehat{g_0}(\xi-\eta)\widehat{g_0}(\eta) \,d\eta ds \\
    &=2^{2l}\cdot\int_0^{2^{m-4l}}\int_{\mathbb{R}^2} e^{is^\prime\widetilde{\Phi}(\xi^\prime,\eta^\prime)} \widehat{g_0}(2^{-3l}\xi^\prime-2^{-l}\eta^\prime)\widehat{g_0}(2^{-l}\eta^\prime) \,d\eta^\prime ds^\prime\\
    &\approx 2^{2l}\cdot \int_0^{+\infty} G(s^\prime)\,ds^\prime \approx 2^{2l}.
\end{align*}
Namely, $\left|\widehat{P_{-3l}f_4}(\xi,t)\right|\approx 2^{2l}$, provided that $m\ge 4.05l$ and $\left|\xi\right|\sim 2^{-3l}$. Moreover, we have that
\begin{align*}
   \widehat{P_{-3l}f_4}(\xi,t)=\reallywidehat{\mathcal{B}}_{\left[3.99l,4.05l\right],\left[-1.03l,-0.99l\right]}+O\left(2^{1.997l}\right).
\end{align*}\par
\vspace{1em}
(b) We divide into three cases: $2^{4l}<t \le 2^{4.05l}$, $2^l<t\le 2^{4l}$ and $t\le 2^l$.\par
\noindent $1^\circ$: $2^{4l}<t \le 2^{4.05l}$\par
In this case, we let $t\sim 2^{(4+\alpha)l}$, where $0<\alpha\le 0.05$. Then, we can proceed as before in part (a) to write that
\begin{align*}
    \widehat{P_{-3l}f_4}(\xi,t)&=\int_0^{2^{(4+\alpha)l}}\int_{\mathbb{R}^2} e^{is\Phi(\xi,\eta)} \widehat{g_0}(\xi-\eta)\widehat{g_0}(\eta) \,d\eta ds \\
    &=2^{2l}\cdot\int_0^{2^{\alpha l}}\int_{\mathbb{R}^2} e^{is^\prime\widetilde{\Phi}(\xi^\prime,\eta^\prime)} \widehat{g_0}(2^{-3l}\xi^\prime-2^{-l}\eta^\prime)\widehat{g_0}(2^{-l}\eta^\prime) \,d\eta^\prime ds^\prime\\
    &\approx 2^{2l}\cdot \int_0^{2^{\alpha l}} G(s^\prime)\,ds^\prime.
\end{align*}
Now, a similar computation as in (\ref{3.5}) tells us that
$$\displaystyle{\left|\int_{2^{\alpha l}}^{+\infty} G(s^\prime)\,ds^\prime\right|\le C_3\cdot 2^{-\alpha l}}\lesssim 1,$$
which implies that
$$\left|\int_0^{2^{\alpha l}} G(s^\prime)\,ds^\prime\right|\le\left|\int_0^{+\infty} G(s^\prime)\,ds^\prime\right|+\left|\int_{2^{\alpha l}}^{+\infty} G(s^\prime)\,ds^\prime\right|\lesssim 1.$$
Therefore, we conclude that
$$\left\|\widehat{P_{-3l}f_4}(\xi,t)\right\|_{L^\infty}\lesssim 2^{2l}.$$\par
\noindent $2^\circ$: $2^l<t \le 2^{4l}$\par
In this case, let $s^\prime=2^{-4l}\cdot s$ and we observe that $s^\prime\le 2^{-3l}$ and therefore $(s^\prime)^{-\frac{1}{3}-0.01}\ge 2^{1.03l}$. Now, we can similarly write
\begin{align*}
    \widehat{P_{-3l}f_4}(\xi,t)&=\int_0^t\int_{\mathbb{R}^2} e^{is\Phi(\xi,\eta)} \widehat{g_0}(\xi-\eta)\widehat{g_0}(\eta) \,d\eta ds \\
    &=2^{2l}\cdot\int_0^{t\cdot 2^{-4l}}\int_{\mathbb{R}^2} e^{is^\prime\widetilde{\Phi}(\xi^\prime,\eta^\prime)} \widehat{g_0}(2^{-3l}\xi^\prime-2^{-l}\eta^\prime)\widehat{g_0}(2^{-l}\eta^\prime) \,d\eta^\prime ds^\prime\\
    &=2^{2l}\cdot\int_0^{t\cdot 2^{-4l}}\int_{\left|\eta^\prime\right|\le (s^\prime)^{-\frac{1}{3}-0.01}} e^{is^\prime\widetilde{\Phi}(\xi^\prime,\eta^\prime)} \widehat{g_0}(2^{-3l}\xi^\prime-2^{-l}\eta^\prime)\widehat{g_0}(2^{-l}\eta^\prime) \,d\eta^\prime ds^\prime\\
    &\ \ \ \ \ +2^{2l}\cdot\int_0^{t\cdot 2^{-4l}}\int_{(s^\prime)^{-\frac{1}{3}-0.01}\le \left|\eta^\prime\right|\le 2^l} e^{is^\prime\widetilde{\Phi}(\xi^\prime,\eta^\prime)} \widehat{g_0}(2^{-3l}\xi^\prime-2^{-l}\eta^\prime)\widehat{g_0}(2^{-l}\eta^\prime) \,d\eta^\prime ds^\prime\\
    &\triangleq I_{11}+I_{12}.
\end{align*}
By volume estimate, we see that
\begin{align*}
    \left|I_{11}\right|\lesssim 2^{2l}\cdot \int_0^{t\cdot 2^{-4l}} \left(s^\prime\right)^{-\frac{2}{3}-0.02}\,ds^\prime \lesssim 2^{2l}\cdot t^{\frac{1}{3}}\cdot 2^{-\frac{4}{3}l}\approx 2^{\frac{2}{3}l}\cdot t^{\frac{1}{3}}\lesssim 2^{2l}.
\end{align*}
On the other hand, as for $I_{12}$, we let $s^\prime\sim 2^{-\lambda l}$ ($\lambda\ge 4-\frac{\log_2 t}{l}$) and write
$$I_{12}=2^{2l}\cdot \int_0^{t\cdot 2^{-4l}}\sum_{\frac{\lambda}{3}+0.01\lambda<\Tilde{\lambda}\le 1 \atop \Tilde{\lambda}l\in\mathbb{Z}} \int_{\left|\eta^\prime\right|\sim 2^{\Tilde{\lambda}l}} e^{is^\prime \Tilde{\Phi}(\xi^\prime,\eta^\prime)}\,d\eta^\prime.$$
Then, proceed as before in (\ref{3.4}) by integrating by parts in $\eta^\prime$ and we can conclude 
$$\left|I_{12}\right|\lesssim 2^{2l}\cdot \int_0^{t\cdot 2^{-4l}}\left(s^\prime\right)^{10}\,ds^\prime\lesssim 2^{2l}\cdot \int_0^{t\cdot 2^{-4l}}\left(s^\prime\right)^{-\frac{2}{3}-0.02}\,ds^\prime\lesssim 2^{\frac{2}{3}l}\cdot t^{\frac{1}{3}}\lesssim 2^{2l}.$$
To sum up, we get that
$$\left\|\widehat{P_{-3l}f_4}(\xi,t)\right\|_{L^\infty}\lesssim 2^{2l}.$$\par
\noindent $3^\circ$: $t \le 2^{l}$\par
In this case, we only need to do the most trivial volume estimate
$$\left|\widehat{P_{-3l}f_4}(\xi,t)\right|=\left|\int_0^t\int_{\mathbb{R}^2} e^{is\Phi(\xi,\eta)} \widehat{g_0}(\xi-\eta)\widehat{g_0}(\eta) \,d\eta ds\right|\lesssim t\lesssim 2^l\lesssim 2^{2l}.$$
\end{proof}
\vspace{2em}
Next, we still assume the frequency of $\widehat{f_4}$ (and $\widehat{f_5}$) is very low and we have to investigate the size of (Fourier-)space derivatives, since we have to integrate by parts in $\eta$ later on. 
\vspace{0.8em}
\begin{prop}
    For any $l\ge D\ge 0$, we fix $l$ and consider the function $\nabla_\xi\widehat{P_{-3l} f_4}(\xi,t)$. Then, we have that for any $t\ge 2^{4.05l}$ and $\left|\xi\right|\sim 2^{-3l}$, 
    $$\left|\nabla_\xi\widehat{P_{-3l} f_4}(\xi,t)\right|\approx 2^{5l}\approx 2^{3l}\cdot \left|\widehat{P_{-3l} f_4}(\xi,t)\right|.$$
    Moreover, for any $t\ge 2^{4.05l}$, $\left|\xi\right|\sim 2^{-3l}$ and multiindex $N$,
    $$\left|\partial^N_\xi\widehat{P_{-3l} f_4}(\xi,t)\right|\lesssim 2^{3\left|N\right|l}\cdot \left|\widehat{P_{-3l} f_4}(\xi,t)\right|,$$
    where $\partial_\xi^N=\partial_{\xi_1}^{N_1}\partial_{\xi_2}^{N_2}$ as usual.
\end{prop}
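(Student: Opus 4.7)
The plan is to differentiate the Duhamel formula
\begin{equation*}
\widehat{P_{-3l}f_4}(\xi,t) = \varphi_{-3l}(\xi)\int_0^t\int_{\mathbb{R}^2} e^{is\Phi(\xi,\eta)}\widehat{g_0}(\xi-\eta)\widehat{g_0}(\eta)\,d\eta\,ds
\end{equation*}
directly under the integral sign and identify the leading contribution. The product rule produces three kinds of terms: (i) $\nabla\varphi_{-3l}(\xi)$ pulled out; (ii) $\nabla_\xi$ acting on the exponential, producing $is\nabla_\xi\Phi$; and (iii) $\nabla\widehat{g_0}(\xi-\eta)$ pulled out. Term (iii) is supported where $|\xi-\eta|\in[1.1,1.2]$, which forces $|\eta|\sim 1$; Corollary 2.2 then gives $\nabla_\eta\Phi\neq 0$, and repeated integration by parts in $\eta$ reduces this contribution to $O(2^{-Nl})$ for any $N$. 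In the portion of $|\xi|\sim 2^{-3l}$ on which $\varphi_{-3l}\equiv 1$ (e.g.\ $0.6\cdot 2^{-3l}\le|\xi|\le 1.1\cdot 2^{-3l}$), term (i) vanishes identically; elsewhere it is $O(2^{5l})$ and contributes only to the upper bound.

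The main analysis is therefore for term (ii). By (\ref{2.2}), $\nabla_\xi\Phi(\xi,\eta)\approx\frac{1}{2}\eta+O(|\xi|^3)$, so in the dominant region $|\eta|\sim 2^{-l}$ and $s\sim 2^{4l}$ identified in Proposition 3.1, $|is\nabla_\xi\Phi|\sim 2^{3l}$. I mimic the proof of Proposition 3.1 with this extra factor inserted. After the rescaling $\xi=2^{-3l}\xi'$, $\eta=2^{-l}\eta'$, $s=2^{4l}s'$, the volume estimate for small $s'$ or small $|\eta'|$, the non-stationary phase argument in $\eta'$ for $|\eta'|\gg 1$, and the stationary-phase argument via Corollary 2.3 for $s'\gg 1$ all go through verbatim to yield
\begin{equation*}
\nabla_\xi\widehat{P_{-3l}f_4}(\xi,t) \approx 2^{5l}\int_0^{+\infty}\!\!\int_{\mathbb{R}^2}\frac{is'\eta'}{2}\, e^{is'\widetilde{\Phi}(\xi',\eta')}\,d\eta'\,ds' + O(2^{4.997l}).
\end{equation*}
The upper bound $|\nabla_\xi\widehat{P_{-3l}f_4}|\lesssim 2^{5l}$ follows immediately. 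The lower bound reduces to showing that the displayed integral is a vector of modulus bounded below; this is handled by the same stationary-phase analysis as in Proposition 3.1: Corollary 2.3 provides a unique nondegenerate stationary point $\eta'(\xi')$ with $\widetilde{\Phi}(\xi',\eta'(\xi'))\neq 0$, so the stationary-phase expansion in $\eta'$ produces a nonvanishing leading constant.

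For higher derivatives $\partial_\xi^N$, the leading contribution from differentiating the exponential $|N|$ times is $(is\nabla_\xi\Phi)^{|N|}e^{is\Phi}$, of magnitude $\sim 2^{3|N|l}$ in the main region. All other contributions --- those containing factors $is\nabla_\xi^j\Phi$ with $j\ge 2$, or those where some derivatives fall on $\varphi_{-3l}$ or $\widehat{g_0}$ --- are at most of the same order. Indeed, each $|\nabla_\xi^j\Phi|$ is bounded by an absolute constant (and vanishes to order at least one at the origin), so replacing $j$ factors of $\nabla_\xi\Phi$ by one factor of $\nabla_\xi^j\Phi$ loses $(j-1)$ copies of $s\sim 2^{4l}$, while $|\nabla\varphi_{-3l}|\sim 2^{3l}$ only saturates the leading rate. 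Combining with the base estimate $|\widehat{P_{-3l}f_4}|\approx 2^{2l}$ yields $|\partial_\xi^N\widehat{P_{-3l}f_4}|\lesssim 2^{3|N|l}|\widehat{P_{-3l}f_4}|$. The main obstacle is the lower bound for $|N|=1$: it requires verifying non-vanishing of the specific oscillatory integral displayed above, which is accomplished exactly as in the corresponding step of Proposition 3.1.
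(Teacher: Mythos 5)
Your proposal follows essentially the same route as the paper: differentiate the Duhamel formula, identify $is\,\nabla_\xi\Phi\approx \tfrac{i}{2}s\eta\sim 2^{3l}$ as the source of the extra factor, rerun the Proposition 3.1 machinery (rescaling, volume estimate, non-stationary phase in $\eta'$, stationary phase for large $s'$) on that main term, and control higher derivatives by the Fa\`a di Bruno--type expansion with $|(is)^r\prod\partial^{\alpha^{(i)}}_\xi\Phi|\lesssim 2^{3|N|l}$. Your handling of the secondary terms is in fact slightly more careful than the paper's --- you use the support of $\nabla\widehat{g_0}$ together with non-stationary phase to make that contribution negligible (the paper merely invokes Proposition 3.1 to call it $O(2^{2l})$), and you explicitly track the cutoff $\varphi_{-3l}$, which the paper suppresses --- but these are refinements within the same argument, not a different method.
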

\begin{proof}
The proof is similar as the one of Proposition 3.1. In this proof, we again set $\xi^\prime=2^{3l}\xi$ and $t\sim 2^m\ge 2^{4.05l}$. \par
We first observe that
\begin{align*}
    \nabla_\xi\widehat{P_{-3l} f_4}(\xi,t)=&\int_0^{2^m}\int_{\mathbb{R}^2} \left[is(\nabla_\xi\Phi)(\xi,\eta)\right]e^{is\Phi(\xi,\eta)} \widehat{g_0}(\xi-\eta)\widehat{g_0}(\eta) \,d\eta ds\\
    &+\int_0^{2^m}\int_{\mathbb{R}^2} e^{is\Phi(\xi,\eta)} \nabla_\xi \widehat{g_0}(\xi-\eta)\widehat{g_0}(\eta) \,d\eta ds.\tag{3.7}\label{3.7}
\end{align*}
Since $\widehat{g_0}(\xi)$ is a Schwartz function, $\nabla_\xi \widehat{g_0}(\xi)$ is also a Schwartz function. Therefore, we can again apply the proof of Proposition 3.1 to conclude that 
\begin{align*}
    \int_0^{2^m}\int_{\mathbb{R}^2} e^{is\Phi(\xi,\eta)} \nabla_\xi \widehat{g_0}(\xi-\eta)\widehat{g_0}(\eta) \,d\eta ds\approx 2^{2l}.
\end{align*}
Now, we focus on the first term. In fact, we can write
\begin{align*}
    &\reallywidehat{\mathcal{B^*}}_{\left[3.99l,4.05l\right],\left[-1.03l,-0.99l\right]}\\
    =&\reallywidehat{\mathcal{B^*}}_{\left[3.99l,4.05l\right],\left[-1.03l,-0.99l\right]}(2^{-3l}\xi^\prime,t)=\reallywidehat{\mathcal{B^*}}_{\left[3.99l,4.05l\right],\left[-1.03l,-0.99l\right]}(\xi,t)\\
    \triangleq&\int_{2^{3.99l}}^{2^{4.05l}}\int_{\mathbb{R}^2} \left[is(\nabla_\xi\Phi)(\xi,\eta)\right]e^{is\Phi(\xi,\eta)} \widehat{g_0}(\xi-\eta)\widehat{g_0}(\eta)\mathbbm{1}_{\left[-1.03l,-0.99l\right]}(\eta)\,d\eta ds \\
    =&\int_{2^{3.99l}}^{2^{4.05l}}\int_{2^{-1.03l}\le\left|\eta\right|\le 2^{-0.99l}} \left[is(\nabla_\xi\Phi)(\xi,\eta)\right]e^{is\Phi(\xi,\eta)} \,d\eta ds \tag{3.8a}\label{3.8a} \\
    =&\int_{2^{3.99l}}^{2^{4.05l}}\int_{2^{-1.03l}\le\left|\eta\right|\le 2^{-0.99l}} \left[is(\nabla_\xi\Phi)(\xi,\eta)\right]e^{is\Phi(\xi,\eta)}\widehat{g_0}(\xi-\eta)\widehat{g_0}(\eta) \,d\eta ds \tag{3.8b}\label{3.8b}\\
\end{align*}
First, we consider the contribution of the above integral (\ref{3.8a}) when $\left|\eta\right|\notin\left[-1.03l,-0.99l\right]$. In fact, we have
\begin{align*}
    &\int_{2^{3.99l}}^{2^{4.05l}}\int_{\mathbb{R}^2} \left[is(\nabla_\xi\Phi)(\xi,\eta)\right]e^{is\Phi(\xi,\eta)} \,d\eta ds \\
    =&\int_{2^{3.99l}}^{2^{4.05l}}\int_{\left|\eta\right|\le 2^{-1.03l}} \left[is(\nabla_\xi\Phi)(\xi,\eta)\right]e^{is\Phi(\xi,\eta)} \,d\eta ds\\
    &\ \ \ \ +\int_{2^{3.99l}}^{2^{4.05l}}\int_{2^{-1.03l}\le \left|\eta\right|\le 2^{-0.99l}} \left[is(\nabla_\xi\Phi)(\xi,\eta)\right]e^{is\Phi(\xi,\eta)} \,d\eta ds\\
    &\ \ \ \ +\int_{2^{3.99l}}^{2^{4.05l}}\int_{\left|\eta\right|\ge 2^{-0.99l}} \left[is(\nabla_\xi\Phi)(\xi,\eta)\right]e^{is\Phi(\xi,\eta)} \,d\eta ds\triangleq I_1+I_2+I_3.
\end{align*}
As for $I_3$, we can decompose in terms of the length of $\eta$ 
\begin{align*}
    I_3&=\sum_{k_2>-0.99l}\int_{2^{3.99l}}^{2^{4.05l}}\int_{\left|\eta\right|\sim 2^{k_2}} \left[is(\nabla_\xi\Phi)(\xi,\eta)\right]e^{is\Phi(\xi,\eta)}\,d\eta ds
\end{align*}
Then, let $\eta^\prime=2^{\lambda l}\eta$ and $s^\prime=2^{-4l}s$ ($0\le\lambda \le 0.99$) and we see that
$$\widetilde{\Phi_\lambda}(\xi^\prime,\eta^\prime)=\frac{1}{2}\cdot2^{-(3-3\lambda)l}\left.\langle\xi^\prime,\eta^\prime\right.\rangle+O\left(2^{-12\lambda+4\lambda l}\left|\xi^\prime\right|^4+\left|\eta^\prime\right|^4\right),$$
which implies that
\begin{align*}
    \nabla_{\xi^\prime}\widetilde{\Phi_\lambda}(\xi^\prime,\eta^\prime)&=\frac{1}{2}\cdot 2^{-(3-3\lambda)l}\eta^\prime+O\left(2^{-12l+4\lambda l}\left|\xi^\prime\right|^3\right)\\
    &=2^{-(3-4\lambda)l}\left(\frac{1}{2}\cdot \eta+O\left(\left|\xi\right|^3\right)\right)=2^{-(3-4\lambda)l}\left(\nabla_\xi\Phi\right)(\xi,\eta).
\end{align*}
Therefore, we get
\begin{align*}
    I_3&=\sum_{k_2>-0.99l} 2^{4l-2\lambda l} \int_{2^{-0.01l}}^{2^{0.05l}}\int_{\left|\eta^\prime\right|\sim 1} \left[is^\prime 2^{(7-4\lambda)l} \nabla_\xi\widetilde{\Phi_\lambda}(\xi^\prime,\eta^\prime)\right]e^{i\left(s^\prime\cdot 2^{(4-4\lambda)l}\right)\widetilde{\Phi_\lambda}(\xi^\prime,\eta^\prime)} \,d\eta^\prime ds^\prime.\\
    &=\sum_{k_2>-0.99l} i\cdot 2^{11l-6\lambda l} \int_{2^{-0.01l}}^{2^{0.05l}}s^\prime \int_{\left|\eta^\prime\right|\sim 1} \left[\nabla_\xi\widetilde{\Phi_\lambda}(\xi^\prime,\eta^\prime)\right]e^{i\left(s^\prime\cdot 2^{(4-4\lambda)l}\right)\widetilde{\Phi_\lambda}(\xi^\prime,\eta^\prime)} \,d\eta^\prime ds^\prime.
\end{align*}
Note that $\left|\nabla_{\eta^\prime}\widehat{g_0}(2^{-3l}\xi^\prime-2^{-\lambda l}\eta^\prime)\right|\sim 2^{-\lambda l}$, $\left|\nabla_{\eta^\prime}\widehat{g_0}(2^{-\lambda l}\eta^\prime)\right|\sim 2^{-\lambda l}$ and $s^\prime\cdot 2^{(4-4\lambda)l}\ge 2^{0.03l}$. 
Lemma 2.1 tells us that $\nabla_{\eta^\prime}\widetilde{\Phi_\lambda}(\xi^\prime,\eta^\prime)\neq 0$, which implies that
\begin{align*}
    I_3&=\sum_{k_2>-0.99l} i\cdot 2^{11l-6\lambda l} \int_{2^{-0.01l}}^{2^{0.05l}} O\left(s^\prime\left(s^\prime\cdot 2^{(4-4\lambda)l}\right)^{-N}\right) ds^\prime
\end{align*}
So, by take $N,D$ large enough, we can get 
\begin{align*}
    \left|I_3\right|\le\sum_{k_2>-0.99l} A_N 2^{11l-6\lambda l-0.04Nl+0.1l}=\frac{4}{3} A_N 2^{(11.1-0.04N)l}\le 2^{-10l}
\end{align*}
On the other hand, as for $I_1$, let $\eta^\prime=2^l \eta$ and $s^\prime=2^{-4l}s$ and we get
\begin{align*}
    I_1=2^{2l} \int_{2^{-0.01l}}^{2^{0.05l}} \int_{\left|\eta^\prime\right|\le 2^{-0.03l}}2^{3l}\left[is^\prime(\nabla_\xi\widetilde{\Phi})(\xi^\prime,\eta^\prime)\right]e^{is^\prime\widetilde{\Phi}(\xi^\prime,\eta^\prime)}\,d\eta^\prime ds^\prime.
\end{align*}
So, by volume estimate, we get 
\begin{align*}
    \left|I_1\right|\le 2^{2l}\cdot 2^{3l}\cdot 2^{0.05l}\cdot C_4\cdot\pi\cdot 2^{-0.06l}=C_4\cdot \pi \cdot2^{4.99l}.
\end{align*}
Thus, in view of (\ref{3.8a}), we have
\begin{align*}
    \reallywidehat{\mathcal{B^*}}_{\left[3.99l,4.05l\right],\left[-1.03l,-0.99l\right]}=\int_{2^{3.99l}}^{2^{4.05l}} \int_{\mathbb{R}^2} \left[is(\nabla_\xi\Phi)(\xi,\eta)\right]e^{is\Phi(\xi,\eta)}\,d\eta ds+O\left(2^{4.99l}\right)
\end{align*}
Let $\eta^\prime=2^l \eta$ and $s^\prime=2^{-4l} s$. Then, we get
\begin{align*}
    \reallywidehat{\mathcal{B^*}}_{\left[3.99l,4.05l\right],\left[-1.03l,-0.99l\right]}&=2^{2l}\cdot 2^{3l}\cdot\int_{2^{-0.01l}}^{2^{0.05l}} \int_{\mathbb{R}^2} \left[is^\prime(\nabla_\xi\widetilde{\Phi})(\xi^\prime,\eta^\prime)\right]e^{is^\prime\widetilde{\Phi}(\xi^\prime,\eta^\prime)} \,d\eta^\prime ds^\prime+O\left(2^{4.99l}\right) \\
    &\triangleq 2^{5l}\cdot\int_{2^{-0.01l}}^{2^{0.05l}}H(s^\prime)\,ds^\prime+O\left(2^{4.99l}\right).\tag{3.9}\label{3.9}
\end{align*}
Now, if $\left|s^\prime\right|\le 2^{-0.01l}<1$, then we decompose
\begin{align*}
    H(s^\prime)&=i\cdot\int_{\mathbb{R}^2} \left[s^\prime(\nabla_\xi\widetilde{\Phi})(\xi^\prime,\eta^\prime)\right]e^{is^\prime\widetilde{\Phi}(\xi^\prime,\eta^\prime)} \,d\eta^\prime\\
    &=i\cdot\int_{\left|\eta^\prime\right|\le\left(s^\prime\right)^{-\frac{1}{3}-0.01}} \left[s^\prime(\nabla_\xi\widetilde{\Phi})(\xi^\prime,\eta^\prime)\right]e^{is^\prime\widetilde{\Phi}(\xi^\prime,\eta^\prime)} \,d\eta^\prime\\
    &\ \ \ +i\cdot\int_{\left|\eta^\prime\right|\ge\left(s^\prime\right)^{-\frac{1}{3}-0.01}} \left[s^\prime(\nabla_\xi\widetilde{\Phi})(\xi^\prime,\eta^\prime)\right]e^{is^\prime\widetilde{\Phi}(\xi^\prime,\eta^\prime)} \,d\eta^\prime\\
    &\triangleq I_{11}+I_{12}.
\end{align*}
By volume estimate, we see that
\begin{align*}
    \left|I_{11}\right|\le C_4\cdot \pi\cdot\left(s^\prime\right)^{-\frac{2}{3}-0.02}.\tag{3.10}\label{3.10}
\end{align*}
In terms of $I_{12}$, we let $s^\prime\sim 2^{-\lambda l}$ ($\lambda\ge 0.01$) and write 
$$I_{12}=\sum_{\frac{\lambda}{3}+0.01\lambda<\Tilde{\lambda}\le 1 \atop \Tilde{\lambda}l\in\mathbb{Z}} i\cdot\int_{\left|\eta^\prime\right|\sim 2^{\Tilde{\lambda}l}} \left[s^\prime(\nabla_\xi\widetilde{\Phi})(\xi^\prime,\eta^\prime)\right]e^{is^\prime \Tilde{\Phi}(\xi^\prime,\eta^\prime)}\,d\eta^\prime.$$
Then, note that $\nabla_{\eta^\prime}\left[s^\prime \Tilde{\Phi}(\xi^\prime,\eta^\prime)\right]=s^\prime\left(\frac{1}{2}\xi^\prime+O(\left|\eta^\prime\right|^2\eta^\prime)\right)$, which implies that $\left|\nabla_{\eta^\prime}\left[s^\prime \Tilde{\Phi}(\xi^\prime,\eta^\prime)\right]\right|\ge 2^{-\lambda l}\cdot 2^{3\Tilde{\lambda}l}\ge 2^{0.03\lambda l}$. Also notice that $\left|\nabla_{\eta^\prime}\widehat{g_0}(2^{-3l}\xi^\prime-2^{-l}\eta^\prime)\right|\sim 2^{-l}$, $\left|\nabla_{\eta^\prime}\widehat{g_0}(2^{-l}\eta^\prime)\right|\sim 2^{-l}$ and $\left|\nabla_{\eta^\prime}\left[s^\prime(\nabla_\xi\widetilde{\Phi})(\xi^\prime,\eta^\prime))\right]\right|\lesssim 1$. Thus, we can keep doing integration by parts in $\eta^\prime$ such that for all $\frac{\lambda}{3}+0.01\lambda<\Tilde{\lambda}<1$, we have
$$\left|\int_{\left|\eta^\prime\right|\sim 2^{\Tilde{\lambda}l}} \left[s^\prime(\nabla_\xi\widetilde{\Phi})(\xi^\prime,\eta^\prime)\right]e^{is^\prime \Tilde{\Phi}(\xi^\prime,\eta^\prime)}\,d\eta^\prime\right|\le 2^{-11\lambda l}.$$
This gives
\begin{align*}
    \left|I_{12}\right|\le \sum_{\frac{\lambda}{3}+0.01\lambda<\Tilde{\lambda}\le 1 \atop \Tilde{\lambda}l\in\mathbb{Z}} 2^{-11\lambda l} \le \left|l\right|\cdot 2^{-11\lambda l}\le \frac{1}{\lambda}\cdot\left|\lambda l\right|\cdot 2^{-11\lambda l}\le \frac{1}{\lambda}\cdot 2^{-10\lambda l}\le 100\cdot \left(s^\prime\right)^{10}\le \pi \cdot \left(s^\prime\right)^{-\frac{2}{3}-0.02}.\tag{3.11}\label{3.11}
\end{align*}
Combining (\ref{3.10}) and (\ref{3.11}), we conclude
\begin{align*}
    \left|\int_0^{2^{-0.01l}} H(s^\prime)\,ds^\prime\right|\le C_4\cdot \pi\cdot\int_0^{2^{-0.01l}} (s^\prime)^{-\frac{2}{3}-0.02}\,ds^\prime=C_4\cdot \frac{150\pi}{47}\cdot 2^{-\frac{47}{15000}l}
\end{align*}
On the other hand, we suppose that $\left|s^\prime\right|\ge 2^{0.05l}$. Note that $\widehat{g_0}$ has a compact support. If there exists $\eta^\prime(\xi^\prime)$ such that $\nabla_{\eta^\prime}\Tilde{\Phi}(\xi^\prime,\eta^\prime(\xi^\prime))$, then we can write
\begin{align*}
    H(s^\prime)=C e^{is^\prime \Tilde{\Phi}(\xi^\prime,\eta^\prime(\xi^\prime))}\cdot\frac{1}{s^\prime}\cdot\left[s^\prime(\nabla_\xi\widetilde{\Phi})(\xi^\prime,\eta^\prime(\xi^\prime))\right]+O\left(\frac{1}{(s^\prime)^2}\right).
\end{align*}
Therefore,
\begin{align*}
    \left|\int_{2^{0.05l}}^{+\infty} H(s^\prime)\,ds^\prime\right|&\lesssim \left|\int_{2^{0.05l}}^{+\infty} e^{is^\prime\Tilde{\Phi}(\xi^\prime,\eta^\prime(\xi^\prime))}\,ds^\prime\right|+\int_{2^{0.05l}}^{+\infty} \frac{ds^\prime}{(s^\prime)^2}\\
    &\lesssim \sum_{m\ge 0.05l}\int e^{is^\prime\Tilde{\Phi}(\xi^\prime,\eta^\prime(\xi^\prime))}\varphi\left(\frac{s^\prime}{2^m}\right)\,ds^\prime+ 2^{-0.05l}\\
    &\lesssim \sum_{m\ge 0.05l}2^{2m}\hat{\varphi}(2^m\Tilde{\Phi}(\xi^\prime,\eta^\prime(\xi^\prime)))+2^{-0.05l}\\
    &\lesssim \sum_{m\ge 0.05l} 2^{-m}+2^{-0.05l}\lesssim  2^{-0.05l}.\tag{3.12}\label{3.12}
\end{align*}
Finally, in view of (\ref{3.9}), we see that
\begin{align*}
    \reallywidehat{\mathcal{B^*}}_{\left[3.99l,4.05l\right],\left[-1.03l,-0.99l\right]}=2^{5l}\cdot\int_0^{+\infty}H(s^\prime)\,ds^\prime+O\left(2^{4.997l}\right).\tag{3.13}\label{3.13}
\end{align*}
Note that
\begin{align*}
    \int_0^{+\infty}H(s^\prime)\,ds^\prime&=\int_0^{+\infty}\int_{\mathbb{R}^2} e^{is^\prime\widetilde{\Phi}(\xi^\prime,\eta^\prime)} \,d\eta^\prime ds^\prime\neq 0
\end{align*}
is a nonzero constant independent of $l$.\par
By a similar argument as in Proposition 3.1, we also have
\begin{align*}
    & \int_0^{+\infty} H(s^\prime)\,ds^\prime\\
    \approx&\int_0^{+\infty}\int_{\mathbb{R}^2} \left[is^\prime(\nabla_\xi\widetilde{\Phi})(\xi^\prime,\eta^\prime)\right]e^{is^\prime\widetilde{\Phi}(\xi^\prime,\eta^\prime)} \widehat{g_0}(2^{-3l}\xi^\prime-2^{-l}\eta^\prime)\widehat{g_0}(2^{-l}\eta^\prime) \,d\eta^\prime ds^\prime\\
    \approx&\int_0^{2^{m-4l}}\int_{\mathbb{R}^2} \left[is^\prime(\nabla_\xi\widetilde{\Phi})(\xi^\prime,\eta^\prime)\right]e^{is^\prime\widetilde{\Phi}(\xi^\prime,\eta^\prime)} \widehat{g_0}(2^{-3l}\xi^\prime-2^{-l}\eta^\prime)\widehat{g_0}(2^{-l}\eta^\prime) \,d\eta^\prime ds^\prime.
\end{align*}
Then we can compute
\begin{align*}
    \nabla_\xi\widehat{P_{-3l}f_4}(\xi,t)&=\int_0^{2^m}\int_{\mathbb{R}^2} \left[is(\nabla_\xi\Phi)(\xi,\eta)\right]e^{is\Phi(\xi,\eta)} \widehat{g_0}(\xi-\eta)\widehat{g_0}(\eta) \,d\eta ds\\
    &+\int_0^{2^m}\int_{\mathbb{R}^2} e^{is\Phi(\xi,\eta)} \nabla_\xi \widehat{g_0}(\xi-\eta)\widehat{g_0}(\eta) \,d\eta ds.\\
    &=2^{5l}\cdot\int_0^{2^{m-4l}}\int_{\mathbb{R}^2} \left[is^\prime(\nabla_\xi\widetilde{\Phi})(\xi^\prime,\eta^\prime)\right]e^{is^\prime\widetilde{\Phi}(\xi^\prime,\eta^\prime)} \widehat{g_0}(2^{-3l}\xi^\prime-2^{-l}\eta^\prime)\widehat{g_0}(2^{-l}\eta^\prime) \,d\eta^\prime ds^\prime\\
    &\ \ \ \ +O\left(2^{2l}\right)\\
    &\approx 2^{5l}\cdot \int_0^{+\infty} H(s^\prime)\,ds^\prime+O\left(2^{2l}\right) \approx 2^{5l}.
\end{align*}
Namely, $\left|\nabla_\xi\widehat{P_{-3l}f_4}(\xi,t)\right|\approx 2^{5l}$, provided that $m\ge 4.05l$ and $\left|\xi\right|\sim 2^{-3l}$. Moreover, we have that
\begin{align*}
   \nabla_\xi\widehat{P_{-3l}f_4}(\xi,t)=\reallywidehat{\mathcal{B^*}}_{\left[3.99l,4.05l\right],\left[-1.03l,-0.99l\right]}+O\left(2^{4.997l}\right).
\end{align*} \par
Next, we consider higher derivatives. In fact, in view of (\ref{3.7}), we only need focus on the first term, since the first term is the main term. The key observations here are 
$$\left|\partial^N_\xi\Phi\right|\lesssim 2^{-3l}\ll 2^{-l},\ \ \ \ \ \mbox{if }\left|N\right|\ge 2,$$
and
$$\partial_\xi^N e^{is\Phi(\xi,\eta)}=e^{is\Phi(\xi,\eta)}\sum_{r=1}^{\left|N\right|} \sum_{\alpha^{(1)}+\dots+\alpha^{(r)}=\left|N\right|\atop \sum_{i=1}^r \alpha_i^{(j)}=N_j\ \ (j=1,2)} C\left(N,r;\alpha^{(1)},\dots,\alpha^{(r)}\right)\cdot(is)^r\cdot\partial_\xi^{\alpha^{(1)}}\Phi\cdot\partial_\xi^{\alpha^{(2)}}\Phi\dots\partial_\xi^{\alpha^{(r)}}\Phi.$$
Then, we just need to do the proof analogously as before and the higher derivative result follows from the fact that when $s\sim 2^{4l}$ and $\left|\eta\right|\sim 2^{-l}$, we have
\begin{align*}
    \left|(is)^r\cdot\partial_\xi^{\alpha^{(1)}}\Phi\cdot\partial_\xi^{\alpha^{(2)}}\Phi\dots\partial_\xi^{\alpha^{(r)}}\Phi\right|\lesssim 2^{4rl}\cdot 2^{-rl}=2^{3rl}=\left(2^{3l}\right)^r.
\end{align*}
\end{proof}
\vspace{2em}
Next, we consider the middle frequency case of $\widehat{f_4}$ (and $\widehat{f_5}$).
\vspace{0.8em}
\begin{prop}
    Fix $D\gg 1$. If $-D\le k\le 0$, then for any $t\ge 0$ we have the following\par
    (a) $\left\|\widehat{P_k f_4}(\xi,t)\right\|_{L^\infty_\xi}\lesssim_D 1$.\par
    (b) $\left\|\nabla_\xi\widehat{P_k f_4}(\xi,t)\right\|_{L^\infty_\xi}\lesssim_D 1$.\par
    (c) $\left\|\partial_\xi^N\widehat{P_k f_4}(\xi,t)\right\|_{L^\infty_\xi}\lesssim_{N,D} 1$, for all multiindex $N$.
\end{prop}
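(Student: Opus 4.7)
The plan is to follow the template of Propositions 3.1 and 3.2, exploiting that $\xi$ now lies in a bounded annulus $|\xi|\sim 2^k$ with $-D\le k\le 0$, and that the $\eta$-support of $\widehat{g_0}(\xi-\eta)\widehat{g_0}(\eta)$ is contained in a fixed compact set. No rescaling is needed, and every quantity entering the argument will be bounded by a constant depending only on $D$. Starting from
\begin{align*}
    \widehat{P_k f_4}(\xi,t) = \varphi_k(\xi)\int_0^t\int_{\mathbb{R}^2} e^{is\Phi(\xi,\eta)}\widehat{g_0}(\xi-\eta)\widehat{g_0}(\eta)\,d\eta\,ds,
\end{align*}
I would split the time integration at a threshold $s_0=2^{CD}$. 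On $[0,s_0]$ the trivial volume estimate gives a contribution of size $\lesssim_D 1$. On $[s_0,t]$ I would apply stationary phase in $\eta$.

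For this I need the following middle-frequency analogue of Corollary 2.3: for every fixed $\xi$ with $|\xi|\sim 2^k$, $-D\le k\le 0$, the map $\eta\mapsto\Phi(\xi,\eta)$ has at most one critical point $\eta_c(\xi)$ inside the support of $\widehat{g_0}$; whenever $\eta_c(\xi)$ is present it is non-degenerate and satisfies $|\Phi(\xi,\eta_c(\xi))|\gtrsim_D 1$; and away from $\eta_c(\xi)$, $|\nabla_\eta\Phi|\gtrsim_D 1$. This is proved by the same reduction as in Corollary 2.3: $\nabla_\eta\Phi=0$ forces $\xi\parallel\eta$ and the scalar identity $a=b-\sqrt{2b^2/(b^2+2)}$, whose unique positive solution satisfies $b\sim a^{1/3}\sim 2^{k/3}$ for small $a$; the Hessian is nonsingular by a direct computation, and $\Phi(\xi,\eta_c(\xi))\approx\tfrac12\langle\xi,\eta_c(\xi)\rangle\gtrsim 2^{4k/3}\gtrsim_D 1$ for small $|\xi|$, while for $|\xi|$ of unit order the critical point either has a uniformly separated value of $\Phi$ or leaves the support of $\widehat{g_0}$ altogether (in which case $|\nabla_\eta\Phi|\gtrsim_D 1$ throughout the support). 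With these inputs, stationary phase in $\eta$ gives
\begin{align*}
    \int_{\mathbb{R}^2} e^{is\Phi(\xi,\eta)}\widehat{g_0}(\xi-\eta)\widehat{g_0}(\eta)\,d\eta = \frac{C_D(\xi)}{s}\,e^{is\Phi(\xi,\eta_c(\xi))}+O_D(s^{-2}),
\end{align*}
plus an $O_{N,D}(s^{-N})$ contribution from a smooth cutoff supported away from $\eta_c(\xi)$ (via repeated $\eta$-integration by parts). The $s^{-1}$ leading term integrates to $O_D(1)$ on $[s_0,t]$ by one integration by parts in $s$, using $|\Phi(\xi,\eta_c(\xi))|\gtrsim_D 1$; the $O_D(s^{-2})$ tail is absolutely integrable. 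This gives (a).

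For (b), differentiating under the integral produces two kinds of terms. The terms in which $\nabla_\xi$ falls on $\widehat{g_0}(\xi-\eta)$ are handled exactly as in (a) (with $\widehat{g_0}$ replaced by the Schwartz function $\nabla_\xi\widehat{g_0}$). The principal term in which $\nabla_\xi$ falls on $e^{is\Phi}$ introduces a prefactor $is\,\nabla_\xi\Phi(\xi,\eta)$ with $|\nabla_\xi\Phi|\lesssim_D 1$; after stationary phase the extra factor of $s$ is cancelled by the $s^{-1}$ gain, and a further $s$-integration by parts (again costing only $|\Phi(\xi,\eta_c(\xi))|^{-1}\lesssim_D 1$) yields $\lesssim_D 1$. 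Part (c) follows by induction on $|N|$: using the standard expansion
\begin{align*}
    \partial_\xi^N e^{is\Phi}=e^{is\Phi}\sum_{r=1}^{|N|}\sum_{\alpha^{(1)}+\cdots+\alpha^{(r)}=N} c_{N,r,\alpha}\,(is)^r\prod_{j=1}^r\partial_\xi^{\alpha^{(j)}}\Phi,
\end{align*}
together with $|\partial_\xi^\alpha\Phi|\lesssim_D 1$, the factor $s^r$ is absorbed by $r$ successive $s$-integrations by parts. The main obstacle is precisely the uniform-in-$\xi$ nondegeneracy input (the middle-frequency analogue of Corollary 2.3 and $|\Phi(\xi,\eta_c(\xi))|\gtrsim_D 1$); once that is in hand, every constant in the stationary-phase / integration-by-parts chain is a finite function of $D$ (and $N$), and the estimates close by bookkeeping only.
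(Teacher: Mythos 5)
Your route is genuinely different from the paper's. The paper's proof of this proposition uses neither stationary phase nor integration by parts in time: it splits the time integral at $\max\left(2^{-1.5k},2^{100}\right)$, bounds the first piece by the trivial volume estimate $\left|t\right|\cdot 1.2^2$, and on the second piece integrates by parts in $\eta$ only, using the claim that $\left|s\cdot\nabla_\eta\Phi\right|\gtrsim\left|s\right|\cdot\left|\xi\right|\ge\left|s\right|^{1/3}$ on the support of $\widehat{g_0}(\xi-\eta)\widehat{g_0}(\eta)$ (the threshold $2^{-1.5k}$ is chosen precisely so that $s\left|\xi\right|\ge s^{1/3}$). Your approach instead transplants the stationary-phase-plus-time-integration-by-parts scheme of Propositions 3.1 and 3.5 to the middle frequencies. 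To your credit, the critical point $\eta_c(\xi)$ with $\left|\eta_c\right|\sim\left|\xi\right|^{1/3}\sim 2^{k/3}$ really does lie inside the support of $\widehat{g_0}$ for $-D\le k\le 0$ (by the same computation as in Corollary 2.3), so the paper's blanket lower bound on $\left|\nabla_\eta\Phi\right|$ is not valid near $\eta_c$ and your treatment of that region is the more careful one. The price you pay is a new load-bearing lemma — the middle-frequency analogue of Corollary 2.3, in particular the uniform bounds $\left|\Phi(\xi,\eta_c(\xi))\right|\gtrsim_D 1$ and $\det\nabla_{\eta\eta}\Phi(\xi,\eta_c)\neq 0$ — which you assert rather than prove; the Taylor argument $\Phi\approx\tfrac12\langle\xi,\eta\rangle$ only covers $\left|\xi\right|\ll 1$, and for $\left|\xi\right|$ of unit order the dichotomy ``uniformly separated value or the critical point leaves the support'' needs an actual argument in the scalar variable $b$, in the spirit of Lemma 2.1.

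There is also a genuine gap in the bookkeeping for (b) and (c). First, in (b) the $O_D(s^{-2})$ stationary-phase remainder multiplied by the prefactor $is\,\nabla_\xi\Phi$ is only $O_D(s^{-1})$, which is not absolutely integrable on $\left[s_0,t\right]$; you need the full asymptotic expansion, in which every term carries the oscillation $e^{is\Phi(\xi,\eta_c(\xi))}$ and can itself be integrated by parts in $s$. Second, and more seriously, the claim in (c) that ``the factor $s^r$ is absorbed by $r$ successive $s$-integrations by parts'' is not a valid mechanism: $\int_{s_0}^{t}s^{r-1}e^{i\beta s}\,ds$ produces a boundary term of size $t^{r-1}/\left|\beta\right|$ at the upper limit, so integration by parts in $s$ trades powers of $s$ for boundary terms of the same size and gains nothing when $r\ge 2$. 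Near $\eta_c$ the amplitude $(s\,\partial_\xi\Phi)^2\sim s^2\left|\eta_c\right|^2$ against the stationary-phase gain $\bigl(s\left|\eta_c\right|^2\bigr)^{-1}$ leaves a term of size $s$, which the single oscillation $e^{is\Phi(\eta_c)}$ reduces only by one power; as written your argument does not show that this contribution is bounded uniformly in $t$. To close (c) you would need either extra vanishing of the amplitude at $\eta_c$ or a different decomposition (for instance, performing the additional $\eta$-integrations by parts away from $\eta_c$, where $\left|\nabla_\eta\Phi\right|\gtrsim_D 1$, before resorting to the time oscillation), and the paper's one-line ``higher derivatives can be done analogously'' does not resolve this either.
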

\begin{proof}
(a) If $t\le \max\left(2^{-1.5k},2^{100}\right)$, then we only need to use volume estimate to get
\begin{align*}
    \left|\widehat{P_k f_4}(\xi,t)\right|&=\left|\int_0^t \int_{\mathbb{R}^2} e^{is\Phi(\xi,\eta)}\widehat{g_0}(\xi-\eta)\widehat{g_0}(\eta)\,d\eta ds\right|\\
    &\le \left|t\right|\cdot 1.2^2\le 2^{1.6D}.
\end{align*}
On the other hand, if $t\ge \max\left(2^{-1.5k},2^{100}\right)$, then we write
\begin{align*}
    \widehat{P_k f_4}(\xi,t)&=\int_0^{\max\left(2^{-1.5k},2^{100}\right)} \int_{\mathbb{R}^2} e^{is\Phi(\xi,\eta)}\widehat{g_0}(\xi-\eta)\widehat{g_0}(\eta)\,d\eta ds\\
    &\ \ \ \ +\int_{\max\left(2^{-1.5k},2^{100}\right)}^t \int_{\mathbb{R}^2} e^{is\Phi(\xi,\eta)}\widehat{g_0}(\xi-\eta)\widehat{g_0}(\eta)\,d\eta ds\triangleq I_1+I_2
\end{align*}
By volume estimate, we again see that $\left|I_1\right|\le 2^{1.6D}$. Next, we consider the term $I_2$ and suppose $\max\left(2^{-1.5k},2^{100}\right)\le s\le t$. We now observe that at least one of $\xi-\eta$ and $\eta$ is greater than $2^{k-2}$, which implies that 
$$\left|s\cdot\nabla_\eta\Phi\right|\sim\left|s\right|\cdot\left(\left|\xi\right|+O\left(\left|\eta\right|^3\right)\right)\sim\left|s\right|\cdot\max\left(\left|\xi\right|,\left|\eta\right|^3\right) \ge \left|s\right|\cdot\left|\xi\right|\ge \left|s\right|^{\frac{1}{3}}.$$
Then, integration by parts in $\eta$ gives that
\begin{align*}
    \left|I_2\right|\lesssim\int_{\max\left(2^{-1.5k},2^{100}\right)}^t \frac{ds}{s^2}\lesssim 2^{-100}.
\end{align*}
To sum up, we've proved that $\left\|\widehat{P_k f_4}(\xi,t)\right\|_{L^\infty_\xi}\lesssim_D 1$.\par
(b) Part (b) can be proven similarly as part (a). If $t\le \max\left(2^{-1.5k},2^{100}\right)$, then we only need to use volume estimate to get
\begin{align*}
    \left|\nabla_\xi\widehat{P_k f_4}(\xi,t)\right|&\le\left|\int_0^t \int_{\mathbb{R}^2} \left[is\nabla_\xi\Phi(\xi,\eta)\right]e^{is\Phi(\xi,\eta)}\widehat{g_0}(\xi-\eta)\widehat{g_0}(\eta)\,d\eta ds\right|\\
    +&\left|\int_0^t \int_{\mathbb{R}^2}e^{is\Phi(\xi,\eta)} \nabla_\xi\widehat{g_0}(\xi-\eta)\widehat{g_0}(\eta)\,d\eta ds\right|\\
    &\lesssim \frac{1}{2}\left|t\right|^2\cdot 1.2^2+\left|t\right|\cdot 1.2^2\lesssim 2^{3D}.
\end{align*}
On the other hand, if $t\ge \max\left(2^{-1.5k},2^{100}\right)$, then we can similarly write
\begin{align*}
    \nabla_\xi\widehat{P_k f_4}(\xi,t)&=\int_0^{\max\left(2^{-1.5k},2^{100}\right)} \int_{\mathbb{R}^2} \left[is\nabla_\xi\Phi(\xi,\eta)\right]e^{is\Phi(\xi,\eta)}\widehat{g_0}(\xi-\eta)\widehat{g_0}(\eta)\,d\eta ds\\
    &\ \ \ \ +\int_{\max\left(2^{-1.5k},2^{100}\right)}^t \int_{\mathbb{R}^2} \left[is\nabla_\xi\Phi(\xi,\eta)\right]e^{is\Phi(\xi,\eta)}\widehat{g_0}(\xi-\eta)\widehat{g_0}(\eta)\,d\eta ds\\
    &\ \ \ \ +\int_0^t \int_{\mathbb{R}^2}e^{is\Phi(\xi,\eta)} \nabla_\xi\widehat{g_0}(\xi-\eta)\widehat{g_0}(\eta)\,d\eta ds\triangleq I_1+I_2+I_3,
\end{align*}
where by volume estimate as before, we see that $\left|I_1\right|, \left|I_3\right|\lesssim 2^{2.6D}\lesssim_D 1$. When it comes to the term $I_2$, we suppose $\max\left(2^{-1.5k},2^{100}\right)\le s\le t$ and again observe that $\left|s\cdot\nabla_\eta\Phi\right|\gtrsim \left|s\right|^{\frac{1}{3}}$. Also note that $\left|\nabla_\eta\Phi\right|\lesssim 1$. Then, integration by parts in $\eta$ gives that
\begin{align*}
    \left|\int_{\mathbb{R}^2}\nabla_\xi\Phi(\xi,\eta) e^{is\Phi(\xi,\eta)}\widehat{g_0}(\xi-\eta)\widehat{g_0}(\eta)\,d\eta ds\right|\lesssim \frac{1}{s^3}.
\end{align*}
Therefore, we see that
\begin{align*}
    \left|I_2\right|\lesssim \int_{\max\left(2^{-1.5k},2^{100}\right)}^t s\cdot \frac{ds}{s^3}\lesssim 2^{-100}.
\end{align*}
To sum up, we've proved that $\left\|\nabla_\xi\widehat{P_k f_4}(\xi,t)\right\|_{L^\infty_\xi}\lesssim_D 1$.\par
(c) Higher derivatives case can be done analogously. 
\end{proof}
\vspace{0.8em}
\begin{remark}
\ \par
(a) The Duhamel's formula of $\widehat{P_k f_4}(\xi,t)$ also tells us that if $k\ge 2$, then $\widehat{P_k f_4}(\xi,t)=0$ for all $t$.\par
(b) By reviewing the proof of Proposition 3.3, we can also see that if $-D\le k\le 0$ ($D\gg 1$), then for any $t\ge 0$
$$\left\|\nabla_\xi\widehat{P_k f_4}(\xi,t)\right\|_{L^\infty_\xi}\lesssim_D \left\|\widehat{P_k f_4}(\xi,t)\right\|_{L^\infty_\xi},$$
and 
$$\left\|\partial_\xi^N\widehat{P_k f_4}(\xi,t)\right\|_{L^\infty_\xi}\lesssim_{N,D} \left\|\widehat{P_k f_4}(\xi,t)\right\|_{L^\infty_\xi}.$$
\end{remark}
\vspace{2em}
It turns out that we also need to integrate by parts in time variable later on. Therefore, we have to figure out the behaviour of the time derivative of $\widehat{f_4}$ (and $\widehat{f_5}$) as well.
\vspace{0.8em}
\begin{prop}
    For any $l\ge D\ge 0$, we fix $l$ and consider the function $\partial_t \widehat{P_{-3l} f_4}(\xi,t)$. Then, we have that for any $t\ge 2^{4.05l}$ and $\left|\xi\right|\sim 2^{-3l}$,\par 
    $\left|\partial_t \widehat{P_{-3l} f_4}(\xi,t)\right|\approx 2^{2l}\cdot\frac{e^{i(\alpha\cdot 2^{-4l}\cdot t)}}{t}$, where $\alpha\triangleq\widetilde{\Phi}(\xi^\prime,\eta^\prime(\xi^\prime))\neq 0$.
\end{prop}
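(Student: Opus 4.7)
The plan is to start by differentiating the Duhamel representation used in Proposition 3.1. Writing
$$\widehat{P_{-3l}f_4}(\xi,t)=\varphi_{-3l}(\xi)\int_0^t\!\!\int_{\mathbb{R}^2}e^{is\Phi(\xi,\eta)}\widehat{g_0}(\xi-\eta)\widehat{g_0}(\eta)\,d\eta\,ds,$$
the fundamental theorem of calculus kills the outer $s$-integral, so
$$\partial_t\widehat{P_{-3l}f_4}(\xi,t)=\varphi_{-3l}(\xi)\int_{\mathbb{R}^2}e^{it\Phi(\xi,\eta)}\widehat{g_0}(\xi-\eta)\widehat{g_0}(\eta)\,d\eta.$$
Introducing the now-familiar rescaling $\eta=2^{-l}\eta'$, $\xi=2^{-3l}\xi'$, and $\tau=2^{-4l}t$, I would rewrite this as
$$\partial_t\widehat{P_{-3l}f_4}(\xi,t)=2^{-2l}\int_{\mathbb{R}^2}e^{i\tau\widetilde{\Phi}(\xi',\eta')}\widehat{g_0}(2^{-3l}\xi'-2^{-l}\eta')\widehat{g_0}(2^{-l}\eta')\,d\eta'.$$
Since $t\ge 2^{4.05l}$, the large parameter $\tau\ge 2^{0.05l}$ is available for a stationary-phase analysis of the $\eta'$-integral.

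Next I would localize the integration in $|\eta'|$. Corollary 2.2 tells us that $\nabla_{\eta'}\widetilde{\Phi}$ does not vanish on the annulus $|\eta|\ge 2^{-l+\delta l}$; a symmetric argument using Lemma 2.1 at $\lambda=1$ shows the same for $|\eta|\le 2^{-l-\delta l}$. On each of these dyadic shells I would integrate by parts in $\eta'$ finitely many times, using the fact that the cutoffs $\widehat{g_0}$ have $\eta'$-derivatives of size $O(2^{-l})$ while the lower bound on $|\nabla_{\eta'}[\tau\widetilde{\Phi}]|$ is a polynomial power of $\tau$, exactly as in (\ref{3.4}) and (\ref{3.11}). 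The outcome is that the full contribution from $|\eta'|\not\sim 1$ is $O(\tau^{-N})$ for any $N$, hence negligible.

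On the remaining piece $|\eta'|\sim 1$, Corollary 2.3 supplies a unique critical point $\eta'(\xi')$ of $\widetilde{\Phi}(\xi',\cdot)$, at which $\alpha:=\widetilde{\Phi}(\xi',\eta'(\xi'))\neq 0$. I would then apply the two-dimensional stationary-phase asymptotic expansion: under the non-degeneracy of $\nabla^2_{\eta'}\widetilde{\Phi}$ at $\eta'(\xi')$, one obtains
$$\int_{|\eta'|\sim 1}\!e^{i\tau\widetilde{\Phi}(\xi',\eta')}\widehat{g_0}(2^{-3l}\xi'-2^{-l}\eta')\widehat{g_0}(2^{-l}\eta')\,d\eta'=\frac{C(\xi')}{\tau}\,e^{i\tau\alpha}+O(\tau^{-2}),$$
with $C(\xi')$ a nonzero $O(1)$ factor depending on the Hessian determinant and the cutoffs evaluated at $\eta'(\xi')$. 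Substituting $\tau=2^{-4l}t$ and collecting the prefactor $2^{-2l}$ produces
$$\partial_t\widehat{P_{-3l}f_4}(\xi,t)=\frac{C(\xi')\cdot 2^{2l}}{t}\,e^{i\alpha\cdot 2^{-4l}\cdot t}+O\!\left(\frac{2^{6l}}{t^2}\right),$$
which since $t\ge 2^{4.05l}$ gives the claimed leading behavior $\approx 2^{2l}\cdot e^{i\alpha\cdot 2^{-4l}t}/t$.

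The main obstacle is verifying the non-degeneracy of $\nabla^2_{\eta'}\widetilde{\Phi}$ at the critical point $\eta'(\xi')$; without this the stationary-phase constant $C(\xi')$ could vanish, and only a weaker $O(\tau^{-2/3})$ bound would be available. I expect this to be checked by a direct computation using the parametrization $\eta=(b,0)$, $\xi=(a,0)$ with $a=b-\sqrt{2b^2/(b^2+2)}\sim b^3$ from the proof of Corollary 2.3. A secondary bookkeeping point is to make sure the error from the away-region integrations by parts truly dominates in powers of $2^l$ rather than in powers of $\tau$, which is why the admissibility threshold $t\ge 2^{4.05l}$ (rather than merely $t\ge 2^{4l}$) is useful.
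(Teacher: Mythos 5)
Your proposal is correct and reaches the same asymptotic, but it is organized differently from the paper. You differentiate the Duhamel formula first (the fundamental theorem of calculus collapses the $s$-integral to the single time $s=t$) and then run stationary phase once in $\eta'$ with the large parameter $\tau=2^{-4l}t$. The paper instead keeps the full time integral, applies stationary phase to $G(s')$ for every $s'$, integrates the resulting $e^{i\alpha s'}/s'$ profile into the special functions $\mathrm{Ci}$ and $\mathrm{Si}$, and only then differentiates in $t$. The analytic core is identical in both routes --- Corollary 2.3 supplies the unique critical point $\eta'(\xi')$ with $\alpha=\widetilde{\Phi}(\xi',\eta'(\xi'))\neq 0$, and the leading term $C\tau^{-1}e^{i\alpha\tau}$ comes from the Hessian there --- but your order of operations is cleaner: it avoids the paper's detour through $\mathrm{Ci}/\mathrm{Si}$ (where the representation $\int_0^{t2^{-4l}}e^{i\alpha s'}/s'\,ds'$ is divergent at $s'=0$ as written and has to be interpreted via the small-time analysis of Proposition 3.1), and it makes transparent why the threshold $t\ge 2^{4.05l}$ forces the $O(2^{6l}/t^{2})$ remainder below the main term. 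Two points you correctly flag but should actually carry out: (i) the nondegeneracy of $\nabla^2_{\eta'}\widetilde{\Phi}$ at $\eta'(\xi')$, which follows from the expansion $\widetilde{\Phi}=\langle\xi',\eta'\rangle+O(|\eta'|^4)+\dots$, whose $\eta'$-Hessian at $|\eta'|\sim 1$ is $\sim -\big(|\eta'|^2 I+2\,\eta'\otimes\eta'\big)/\,\mathrm{const}$, hence negative definite with determinant $\sim 1$ (the paper silently assumes this too); and (ii) the region $|\eta'|\ll 1$ is not $O(\tau^{-N})$ for all $N$ as you claim, since the shell cutoffs there have large derivatives --- but the phase is non-stationary and essentially linear, $\approx\langle\xi',\eta'\rangle$ with $|\xi'|\sim 1$, so a one-dimensional integration by parts combined with the transverse volume gives $O(\tau^{-3/2})$, which still sits strictly below the $\tau^{-1}$ main term. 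With those two items made explicit your argument is complete.
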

\begin{proof}
\ \par
As before, we let $s^\prime=2^{-4l}\cdot s$, $\xi^\prime=2^{3l}\cdot \xi$ and $\eta^\prime=2^l\cdot \eta$. Then, by Duhamel's formula and the method of stationary phase, we write
\begin{align*}
    \widehat{P_{-3l} f_4}(\xi,t)&=\widehat{P_{-3l} f_4}(2^{-3l}\xi^\prime,t)\\
    &=\int_0^t \int_{\mathbb{R}^2} e^{is\Phi(\xi,\eta)}\widehat{g_0}(\xi-\eta)\widehat{g_0}(\eta)\,d\eta ds\\
    &=2^{2l}\cdot \int_0^{t\cdot 2^{-4l}}\int_{\mathbb{R}^2} e^{is^\prime \widetilde{\Phi}(\xi^\prime,\eta^\prime)}\widehat{g_0}(2^{-3l}\xi^\prime-2^{-l}\eta^\prime)\widehat{g_0}(2^{-l}\eta^\prime)\,d\eta^\prime ds^\prime\\
    &=2^{2l}\cdot \int_0^{t\cdot 2^{-4l}} \left[e^{is^\prime \widetilde{\Phi}(\xi^\prime,\eta^\prime(\xi^\prime))}\cdot \frac{1}{s^\prime}\cdot \widehat{g_0}(2^{-3l}\xi^\prime-2^{-l}\eta^\prime(\xi^\prime))\widehat{g_0}(2^{-l}\eta^\prime(\xi^\prime))+O\left(\frac{1}{(s^\prime)^2}\right)\right]\,ds^\prime\\
    &=2^{2l}\cdot \int_0^{t\cdot 2^{-4l}} \frac{e^{i\alpha s^\prime}}{s^\prime}\,ds^\prime+C\cdot 2^{2l}\cdot \int_0^{t\cdot 2^{-4l}} \frac{ds^\prime}{(s^\prime)^2}\\
    &=2^{2l}\cdot \int_0^{\alpha\cdot t\cdot 2^{-4l}} \frac{e^{i \widetilde{s^\prime}}}{\widetilde{s^\prime}}\,d\widetilde{s^\prime}+C\cdot 2^{2l}\cdot \int_0^{t\cdot 2^{-4l}} \frac{ds^\prime}{(s^\prime)^2},
\end{align*}
where in view of Corollary 2.3, $\eta^\prime(\xi^\prime)$ is the vector such that $\nabla_{\eta^\prime}\widetilde{\Phi}(\xi^\prime,\eta^\prime(\xi^\prime))=0$ and $\widetilde{\Phi}(\xi^\prime,\eta^\prime(\xi^\prime))\triangleq\alpha\neq 0$. Denote $\displaystyle{\mbox{Ci}(x)\triangleq\int_0^x \frac{\cos u}{u}\,du}$ and $\displaystyle{\mbox{Si}(x)\triangleq\int_0^x \frac{\sin u}{u}\,du}$. Now, we can write
\begin{align*}
    \widehat{P_{-3l} f_4}(\xi,t)=2^{2l}\left(\mbox{Ci}(\alpha \,t\, 2^{-4l})+i\cdot\mbox{Si}(\alpha\, t\, 2^{-4l})\right)+C\cdot 2^{2l}\cdot \int_0^{t\cdot 2^{-4l}} \frac{ds^\prime}{(s^\prime)^2}.
\end{align*}
Take the derivative of $t$ and we get
\begin{align*}
    \partial_t \widehat{P_{-3l} f_4}(\xi,t)&=2^{2l}\cdot \alpha \cdot 2^{-4l}\left(\mbox{Ci}^\prime(\alpha \,t\, 2^{-4l})+i\cdot\mbox{Si}^\prime(\alpha\, t\, 2^{-4l})\right)+C\cdot 2^{2l}\cdot \frac{2^{4l}}{t^2} \\
    &=2^{2l}\cdot \alpha \cdot 2^{-4l}\left(\frac{\cos\left(\alpha\,t\,2^{-4l}\right)}{\alpha\,t\,2^{-4l}}+i\cdot\frac{\cos\left(\alpha\,t\,2^{-4l}\right)}{\alpha\,t\,2^{-4l}}\right)+C\cdot 2^{2l}\cdot \frac{2^{4l}}{t^2}\\
    &=2^{2l}\cdot\frac{e^{i\left(\alpha\cdot 2^{-4l}\cdot t\right)}}{t}+C\cdot 2^{2l}\cdot \frac{2^{4l}}{t^2}.
\end{align*}
Since we've assumed $t\ge 2^{4.05l}$, we see that $\left|C\cdot 2^{2l}\cdot \frac{2^{4l}}{t^2}\right|\lesssim 2^{2l}\cdot\displaystyle{\frac{1}{t}}$. This implies that 
$$\left|\partial_t \widehat{P_{-3l} f_4}(\xi,t)\right|\approx 2^{2l}\cdot\frac{e^{i(\alpha\cdot 2^{-4l}\cdot t)}}{t},$$
provided that $\left|\xi\right|\sim 2^{-3l}$.
\end{proof}
\vspace{0.8em}
\begin{remark}
Proposition 3.1 - 3.3, Remark 3.4 and Proposition 3.5 also hold for the function $f_5$, since
$$\sqrt{2\left|\xi\right|^2+4}-\sqrt{4\left|\xi-\eta\right|^2+16}+\sqrt{2\left|\eta\right|^2+4}=\frac{1}{2}\left\langle \xi,\eta\right\rangle+O\left(\left|\xi\right|^4+\left|\eta\right|^4\right).$$
\end{remark}
\vspace{2em}
Now, having found all properties of the first iterative outputs $\widehat{f_4}$ and $\widehat{f_5}$, it's time for us to do the second iteration. This time, we will first find the pointwise size of $\widehat{P_{-9l}f_6}$ with pointing out the main contribution at $\left|\xi\right|\sim 2^{-9l}$, $\left|\eta\right|\sim 2^{-3l}$ and $t\sim 2^{12l}$. Then, we will compute the $L^2$ norm of $\widehat{P_{-9l}f_6}$, which will be a very large number.
\vspace{0.8em}
\begin{prop}
    For any $l\ge 10D\gg 1$, we fix $l$ and consider the function $\widehat{P_{-9l}f_6}(\xi,t)$. Then, we have that for any $t\ge 2^{12.05l}$ and $\left|\xi\right|\sim 2^{-9l}$, $\left|\widehat{P_{-9l}f_6}(\xi,t)\right|\approx 2^{10l}$. Moreover, we can write 
    $$\widehat{P_{-9l}f_6}(\xi,t)\approx\reallywidehat{\mathcal{B}}_{\left[11.99l,12.05l\right],\left[-3.09l,-2.99l\right]}+e(\xi,t)\approx 2^{10l}+e(\xi,t),$$
    where $\left|e(\xi,t)\right|\lesssim 2^{9.997l}$.
\end{prop}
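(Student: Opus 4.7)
I would mimic Proposition 3.1 one level up, with the rescaling triple $(|\xi|,|\eta|,s) \sim (2^{-9l},2^{-3l},2^{12l})$ in place of $(2^{-3l},2^{-l},2^{4l})$. By Duhamel applied to the sixth equation of (\ref{1.5}) and passing to profiles,
\[
\widehat{f_6}(\xi,t) = \int_0^t \int_{\mathbb{R}^2} e^{is\Phi_6(\xi,\eta)}\,\widehat{f_5}(s,\eta)\,\overline{\widehat{f_4}(s,\eta-\xi)}\,d\eta\,ds,
\]
where $\Phi_6(\xi,\eta)$ has the same Taylor structure near the origin as $\Phi$ in (\ref{2.2}) (up to relabeling $\eta\leftrightarrow \xi-\eta$); in particular, Lemma 2.1 and Corollaries 2.2, 2.3 apply to $\Phi_6$ after the substitution $\xi' = 2^{9l}\xi$, $\eta' = 2^{3l}\eta$, $s' = 2^{-12l}s$.

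The main piece $\reallywidehat{\mathcal{B}}_{[11.99l,12.05l],[-3.09l,-2.99l]}$ is the integral restricted to $s\in[2^{11.99l},2^{12.05l}]$ and $|\eta|\in[2^{-3.09l},2^{-2.99l}]$. On this window, Proposition 3.1(a), Proposition 3.2 and Remark 3.6 give
\[
\widehat{f_5}(s,\eta)\,\overline{\widehat{f_4}(s,\eta-\xi)} = 2^{4l}\bigl(1 + O(2^{-0.003l})\bigr),
\]
with $\eta$-gradient of size $O(2^{3l})$, matching the natural derivative scale of the rescaled variables. Replacing the product by the constant $2^{4l}$ reduces the main piece to a copy of the oscillatory integral $\int_0^{+\infty} G(s')\,ds'$ from (\ref{3.6a}), a nonzero $O(1)$ constant; the ambient prefactor $2^{-6l}\cdot 2^{12l}\cdot 2^{4l} = 2^{10l}$ then yields $|\reallywidehat{\mathcal{B}}_{[11.99l,12.05l],[-3.09l,-2.99l]}| \approx 2^{10l}$.

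The four complementary regions are controlled by essentially verbatim translations of Proposition 3.1: the large-$|\eta|$ region $|\eta|\geq 2^{-2.99l}$ by $\eta$-integration by parts via Corollary 2.2, giving $O(2^{-Nl})$; the small-$|\eta|$ region $|\eta|\leq 2^{-3.09l}$ by the volume/IBP dichotomy of (\ref{3.3})-(\ref{3.4}), contributing $O(2^{10l-47l/15000})$; the late-time tail $s\geq 2^{12.05l}$ by the stationary-phase expansion of Corollary 2.3 as in (\ref{3.5}), giving $O(2^{10l-0.05l})$; and the early-time tail $s\leq 2^{11.99l}$ by the three-case split of Proposition 3.1(b), simplified by the fact that $s\geq 2^{4.05l}$ holds uniformly on the main window so that Propositions 3.1, 3.2 apply to the inputs without loss. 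All four terms comfortably fit within the error budget $2^{9.997l}$.

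The main obstacle is the budget matching: the relative error $2^{-0.003l}$ inherited from Proposition 3.1(a), multiplied by the ambient prefactor $2^{10l}$, produces exactly $2^{9.997l}$, leaving no slack. A subtler point is to rule out that the $s$-oscillation of the inputs (with frequency $\sim 2^{-4l}$ according to Proposition 3.5, i.e.\ roughly $2^{8l}$ cycles across the main $s$-window) conspires to cancel the leading contribution. This follows because $\eta$-stationary phase via Corollary 2.3 is performed first: the inputs are smooth in $\eta$ at scale $2^{-3l}$, so $\eta$-IBP extracts the pointwise value of $\widehat{f_5}\,\overline{\widehat{f_4}}$ at the critical point $\eta'(\xi')$, after which the $s$-integration against $e^{is'\widetilde{\Phi}(\xi',\eta'(\xi'))}/s'$ yields an $O(1)$ contribution exactly as in (\ref{3.6a}).
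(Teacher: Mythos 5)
Your proposal follows essentially the same route as the paper: the same rescaling $(\xi'',\eta'',s'')=(2^{9l}\xi,2^{3l}\eta,2^{-12l}s)$, the same identification of the main window $s\in[2^{11.99l},2^{12.05l}]$, $|\eta|\in[2^{-3.09l},2^{-2.99l}]$, the same treatment of the complementary regions via Lemma 2.1/Corollaries 2.2--2.3 together with the input bounds of Propositions 3.1--3.3, and the same resolution of the temporal-resonance issue (the paper makes your last point precise by integrating by parts in $s''$ against the nonzero frequencies $\beta_i=\widetilde{\Phi}(\xi'',\eta''(\xi''))-\alpha_i 2^{-4l}$ supplied by Proposition 3.5). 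The only difference is one of bookkeeping: the paper expands $\widehat{f_4},\widehat{f_5}=2^{2l}+e$ and estimates the cross terms $J_2,J_3,J_4$ separately, and decomposes the small-$\eta$ region dyadically with frequency-dependent $L^\infty$ bounds, where you compress this into the relative error $O(2^{-0.003l})$; the substance is the same.
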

\begin{proof}
\ \par
The proof is similar as in Proposition 3.1, except that we now have non-Schwartz inputs. In this proof, we set $\xi^{\prime\prime}=2^{9l}\xi$ and $t\sim 2^m$. Also, we correspondingly use a different rescaling of the phase function $\Phi$, namely let $\Phi(\xi,\eta)=2^{-12l}\widetilde{\Phi}(\xi^{\prime\prime},\eta^{\prime\prime})$ ($\eta^{\prime\prime}=2^{3l}\eta$) and more generally $\Phi(\xi,\eta)=2^{-4\lambda l}\widetilde{\Phi}(\xi^{\prime\prime},\eta^{\prime\prime})$ ($\eta^{\prime\prime}=2^{\lambda l}\eta$, $0\le \lambda\le 3$). \par
The main observation in our proof is the main term of the Duhamel's formula of $\widehat{P_{-9l}f_6}(\xi,t)$ is $\reallywidehat{\mathcal{B}}_{\left[11.99l,12.05l\right],\left[-3.09l,-2.99l\right]}$.\par
In fact, we can write
\begin{align*}
    &\reallywidehat{\mathcal{B}}_{\left[11.99l,12.05l\right],\left[-3.09l,-2.99l\right]}\\
    =&\reallywidehat{\mathcal{B}}_{\left[11.99l,12.05l\right],\left[-3.09l,-2.99l\right]}(2^{-9l}\xi^{\prime\prime},t)=\reallywidehat{\mathcal{B}}_{\left[11.99l,12.05l\right],\left[-3.09l,-2.99l\right]}(\xi,t) \\
    \triangleq&\int_{2^{11.99l}}^{2^{12.05l}}\int_{\mathbb{R}^2} e^{is\Phi(\xi,\eta)} \widehat{f_4}(\xi-\eta,s)\widehat{f_5}(\eta,s)\mathbbm{1}_{\left[-3.09l,-2.99l\right]}(\eta)\,d\eta ds \\
    =&\int_{2^{11.99l}}^{2^{12.05l}}\int_{2^{-3.09l}\le\left|\eta\right|\le 2^{-2.99l}} e^{is\Phi(\xi,\eta)}\widehat{f_4}(\xi-\eta,s)\widehat{f_5}(\eta,s) \,d\eta ds \tag{3.14}\label{3.14}\\
\end{align*}
Now, we consider the contribution of the above integral (\ref{3.14}) when $\left|\eta\right|\notin\left[-3.09l,-2.99l\right]$. In fact, we have
\begin{align*}
    &\int_{2^{11.99l}}^{2^{12.05l}}\int_{\mathbb{R}^2} e^{is\Phi(\xi,\eta)}\widehat{f_4}(\xi-\eta,s)\widehat{f_5}(\eta,s) \,d\eta ds \\
    =&\int_{2^{11.99l}}^{2^{12.05l}}\int_{\left|\eta\right|\le 2^{-3.09l}} e^{is\Phi(\xi,\eta)}\widehat{f_4}(\xi-\eta,s)\widehat{f_5}(\eta,s) \,d\eta ds\\
    &\ \ \ \ +\int_{2^{11.99l}}^{2^{12.05l}}\int_{2^{-3.09l}\le \left|\eta\right|\le 2^{-2.99l}} e^{is\Phi(\xi,\eta)}\widehat{f_4}(\xi-\eta,s)\widehat{f_5}(\eta,s) \,d\eta ds\\
    &\ \ \ \ +\int_{2^{11.99l}}^{2^{12.05l}}\int_{\left|\eta\right|\ge 2^{-2.99l}} e^{is\Phi(\xi,\eta)}\widehat{f_4}(\xi-\eta,s)\widehat{f_5}(\eta,s) \,d\eta ds\triangleq I_1+I_2+I_3.
\end{align*}
As for $I_3$, we can decompose in terms of the length of $\eta$ 
$$I_3=\sum_{k_2>-2.99l}\int_{2^{11.99l}}^{2^{12.05l}}\int_{\left|\eta\right|\sim 2^{k_2}} e^{is\Phi(\xi,\eta)}\widehat{f_4}(\xi-\eta,s)\widehat{f_5}(\eta,s)\,d\eta ds.$$
Then, let $\eta^{\prime\prime}=2^{\lambda l}\eta$ and $s^{\prime\prime}=2^{-12l}s$ ($0\le\lambda \le 2.99$) and get
\begin{align*}
    I_3=\sum_{k_2>-2.99l} 2^{12l-2\lambda l} \int_{2^{-0.01l}}^{2^{0.05l}}\int_{\left|\eta^{\prime\prime}\right|\sim 1}e^{i\left(s^{\prime\prime}\cdot 2^{(12-4\lambda)l}\right)\widetilde{\Phi_\lambda}(\xi^{\prime\prime},\eta^{\prime\prime})}\widehat{f_4}(2^{-9l}\xi^{\prime\prime}-2^{-3l}\eta^{\prime\prime},s^{\prime\prime})\widehat{f_5}(2^{-3l}\eta^{\prime\prime},s^{\prime\prime}) \,d\eta^{\prime\prime} ds^{\prime\prime}.
\end{align*}
If $\lambda l\ge 3D$, then Proposition 3.2 tells us that
\begin{align*}
    \begin{cases}
        &\left|\nabla_{\eta^{\prime\prime}}\widehat{f_4}(2^{-9l}\xi^{\prime\prime}-2^{-\lambda l}\eta^{\prime\prime},s^{\prime\prime})\right|\sim 2^{\lambda l}\cdot 2^{-\lambda l}\cdot\left|\widehat{f_4}(2^{-9l}\xi^{\prime\prime}-2^{-\lambda l}\eta^{\prime\prime},s^{\prime\prime})\right|\sim\left|\widehat{f_4}(2^{-9l}\xi^{\prime\prime}-2^{-\lambda l}\eta^{\prime\prime},s^{\prime\prime})\right| \\[8pt] &\left|\nabla_{\eta^{\prime\prime}}\widehat{f_5}(2^{-\lambda l}\eta^{\prime\prime},s^{\prime\prime})\right|\sim 2^{\lambda l}\cdot 2^{-\lambda l}\cdot\left|\widehat{f_5}(2^{-\lambda l}\eta^{\prime\prime},s^{\prime\prime})\right|\sim\left|\widehat{f_5}(2^{-\lambda l}\eta^{\prime\prime},s^{\prime\prime})\right| 
    \end{cases},
\end{align*}
since $\left|2^{-\lambda l}\eta^{\prime\prime}\right|, \left|2^{-9l}\xi^{\prime\prime}-2^{-\lambda l}\eta^{\prime\prime}\right|\sim 2^{-\lambda l}$
On the other hand, if $\lambda l\le 3D$, then we apply Proposition 3.3 to get that 
\begin{align*}
    \begin{cases}
        &\left|\nabla_{\eta^{\prime\prime}}\widehat{f_4}(2^{-9l}\xi^{\prime\prime}-2^{-\lambda l}\eta^{\prime\prime},s^{\prime\prime})\right|\lesssim \left|\widehat{f_4}(2^{-9l}\xi^{\prime\prime}-2^{-\lambda l}\eta^{\prime\prime},s^{\prime\prime})\right| \\[8pt] &\left|\nabla_{\eta^{\prime\prime}}\widehat{f_5}(2^{-\lambda l}\eta^{\prime\prime},s^{\prime\prime})\right|\lesssim \left|\widehat{f_5}(2^{-\lambda l}\eta^{\prime\prime},s^{\prime\prime})\right| 
    \end{cases}.
\end{align*}
Also note that $s^{\prime\prime}\cdot 2^{(12-4\lambda)l}\ge 2^{0.03l}$. 
Lemma 2.1 (substitute $l$ by $3l$) tells us that $\nabla_{\eta^{\prime\prime}}\widetilde{\Phi_\lambda}(\xi^{\prime\prime},\eta^{\prime\prime})\neq 0$, which implies that
\begin{align*}
    I_3&=\sum_{k_2>-2.99l} 2^{12l-2\lambda l} \int_{2^{-0.01l}}^{2^{0.05l}} O\left(\left(s^{\prime\prime}\cdot 2^{(12-4\lambda)l}\right)^{-N}\right) ds^{\prime\prime}
\end{align*}
So, by take $N,D$ large enough, we can get 
\begin{align*}
    \left|I_3\right|\le\sum_{k_2>-2.99l} A_N 2^{12l-2\lambda l-0.04Nl+0.05l}=\frac{4}{3} A_N 2^{(12.05-0.04N)l}\le 2^{-10l}
\end{align*}
On the other hand, as for $I_1$, let $\eta^{\prime\prime}=2^{3l} \eta$ and $s^{\prime\prime}=2^{-12l}s$ and we get
\begin{align*}
    I_1&=2^{6l} \int_{2^{-0.01l}}^{2^{0.05l}} \int_{\left|\eta^{\prime\prime}\right|\le 2^{-0.09l}}e^{is^{\prime\prime}\widetilde{\Phi}(\xi^{\prime\prime},\eta^{\prime\prime})}\widehat{f_4}(2^{-9l}\xi^{\prime\prime}-2^{-3l}\eta^{\prime\prime},s^{\prime\prime})\widehat{f_5}(2^{-3l}\eta^{\prime\prime},s^{\prime\prime})\,d\eta^{\prime\prime} ds^{\prime\prime}\\
    &=\sum_{-6l\le \widetilde{k_2}\le -0.09l}2^{6l} \int_{2^{-0.01l}}^{2^{0.05l}} \int_{\left|\eta^{\prime\prime}\right|\sim 2^{\widetilde{k_2}}}e^{is^{\prime\prime}\widetilde{\Phi}(\xi^{\prime\prime},\eta^{\prime\prime})}\widehat{f_4}(2^{-9l}\xi^{\prime\prime}-2^{-3l}\eta^{\prime\prime},s^{\prime\prime})\widehat{f_5}(2^{-3l}\eta^{\prime\prime},s^{\prime\prime})\,d\eta^{\prime\prime} ds^{\prime\prime}\\
    &\ \ \ +\sum_{\widetilde{k_2}\le -6l}2^{6l} \int_{2^{-0.01l}}^{2^{0.05l}} \int_{\left|\eta^{\prime\prime}\right|\sim 2^{\widetilde{k_2}}}e^{is^{\prime\prime}\widetilde{\Phi}(\xi^{\prime\prime},\eta^{\prime\prime})}\widehat{f_4}(2^{-9l}\xi^{\prime\prime}-2^{-3l}\eta^{\prime\prime},s^{\prime\prime})\widehat{f_5}(2^{-3l}\eta^{\prime\prime},s^{\prime\prime})\,d\eta^{\prime\prime} ds^{\prime\prime}
\end{align*}
So, by volume estimate and Proposition 3.2, we get 
\begin{align*}
    \left|I_1\right|&\le\sum_{-6l\le \widetilde{k_2}\le -0.09l} 2^{6l}\cdot 2^{0.05l}\cdot \pi\cdot 2^{2k_2}\cdot\left\|\widehat{f_4}\right\|_{L^\infty}\cdot\left\|\widehat{f_5}\right\|_{L^\infty}+\sum_{\widetilde{k_2}\le -6l}2^{6l}\cdot 2^{0.05l}\cdot \pi\cdot 2^{2k_2}\cdot\left\|\widehat{f_4}\right\|_{L^\infty}\cdot\left\|\widehat{f_5}\right\|_{L^\infty}\\
    &\le \sum_{-6l\le \widetilde{k_2}\le -0.09l} 2^{6l}\cdot 2^{0.05l}\cdot \pi\cdot 2^{2k_2}\cdot 2^{2l-\frac{2}{3}k_2}\cdot 2^{2l-\frac{2}{3}k_2}+\sum_{\widetilde{k_2}\le -6l}2^{6l}\cdot 2^{0.05l}\cdot \pi\cdot 2^{2k_2}\cdot 2^{6l}\cdot 2^{2l-\frac{2}{3}k_2}\\
    &\le 2^{10.05l}\cdot \sum_{-6l\le \widetilde{k_2}\le -0.09l} 2^{\frac{2}{3}k_2}+2^{14.05l}\cdot \sum_{\widetilde{k_2}\le -6l} 2^{\frac{4}{3}k_2}\lesssim 2^{9.99l}.
\end{align*}
Thus, in view of (\ref{3.14}), we have
\begin{align*}
    \reallywidehat{\mathcal{B}}_{\left[11.99l,12.05l\right],\left[-3.09l,-2.99l\right]}=C_\varphi \int_{2^{11.99l}}^{2^{12.05l}} \int_{\mathbb{R}^2} e^{is\Phi(\xi,\eta)}\widehat{f_4}(\xi-\eta,s)\widehat{f_5}(\eta,s)\,d\eta ds+O\left(2^{9.99l}\right)
\end{align*}
Let $\eta^{\prime\prime}=2^{3l} \eta$ and $s^{\prime\prime}=2^{-12l} s$. Then, we get
\begin{align*}
    &\reallywidehat{\mathcal{B}}_{\left[11.99l,12.05l\right],\left[-3.09l,-2.99l\right]}\\
    =&C_\varphi \cdot 2^{6l}\cdot\int_{2^{-0.01l}}^{2^{0.05l}} e^{is^{\prime\prime}\widetilde{\Phi}(\xi^{\prime\prime},\eta^{\prime\prime})}\widehat{f_4}(2^{-9l}\xi^{\prime\prime}-2^{-3l}\eta^{\prime\prime},s^{\prime\prime})\widehat{f_5}(2^{-3l}\eta^{\prime\prime},s^{\prime\prime})\,d\eta^{\prime\prime} ds^{\prime\prime}+O\left(2^{9.99l}\right) \\
    \triangleq& C_\varphi \cdot 2^{6l}\cdot\int_{2^{-0.01l}}^{2^{0.05l}}J(s^{\prime\prime})\,ds^{\prime\prime}+O\left(2^{9.99l}\right).\tag{3.15}\label{3.15}
\end{align*}
Now, if $\left|s^{\prime\prime}\right|\le 2^{-0.01l}$, then we decompose
\begin{align*}
    J(s^{\prime\prime})&=\int_{\mathbb{R}^2} e^{is^{\prime\prime} \Tilde{\Phi}(\xi^{\prime\prime},\eta^{\prime\prime})}\widehat{f_4}(2^{-9l}\xi^{\prime\prime}-2^{-3l}\eta^{\prime\prime},s^{\prime\prime})\widehat{f_5}(2^{-3l}\eta^{\prime\prime},s^{\prime\prime})\,d\eta^{\prime\prime}\\
    &=\int_{\left|\eta^{\prime\prime}\right|\le 2^l\cdot\left(s^{\prime\prime}\right)^{-\frac{1}{3}-0.01}} e^{is^{\prime\prime} \Tilde{\Phi}(\xi^{\prime\prime},\eta^{\prime\prime})}\widehat{f_4}(2^{-9l}\xi^{\prime\prime}-2^{-3l}\eta^{\prime\prime},s^{\prime\prime})\widehat{f_5}(2^{-3l}\eta^{\prime\prime},s^{\prime\prime})\,d\eta^{\prime\prime}\\
    &\ \ \ +\int_{\left|\eta^{\prime\prime}\right|\ge 2^l\cdot \left(s^{\prime\prime}\right)^{-\frac{1}{3}-0.01}} e^{is^{\prime\prime} \Tilde{\Phi}(\xi^{\prime\prime},\eta^{\prime\prime})}\widehat{f_4}(2^{-9l}\xi^{\prime\prime}-2^{-3l}\eta^{\prime\prime},s^{\prime\prime})\widehat{f_5}(2^{-3l}\eta^{\prime\prime},s^{\prime\prime})\,d\eta^{\prime\prime}\\
    &\triangleq I_{11}+I_{12}.
\end{align*}
In terms of $I_{11}$, since $\left|s^{\prime\prime}\right|\le 2^{-0.01l}$, we see that $2^{\widetilde{k_2}}\le 2^l\cdot (s^{\prime\prime})^{-\frac{1}{3}-0.01}\le 2^l$. 
Therefore, we can further decompose
\begin{align*}
    I_{11}&=\int_{2^{-3D}\le \left|\eta^{\prime\prime}\right|\le 2^l\cdot (s^{\prime\prime})^{-\frac{1}{3}-0.01}}e^{is^{\prime\prime}\widetilde{\Phi}(\xi^{\prime\prime},\eta^{\prime\prime})}\widehat{f_4}(2^{-9l}\xi^{\prime\prime}-2^{-3l}\eta^{\prime\prime},s^{\prime\prime})\widehat{f_5}(2^{-3l}\eta^{\prime\prime},s^{\prime\prime})\,d\eta^{\prime\prime} \\
    &\ \ \ +\sum_{-6l\le \widetilde{k_2}\le -3D}  \int_{\left|\eta^{\prime\prime}\right|\sim 2^{\widetilde{k_2}}}e^{is^{\prime\prime}\widetilde{\Phi}(\xi^{\prime\prime},\eta^{\prime\prime})}\widehat{f_4}(2^{-9l}\xi^{\prime\prime}-2^{-3l}\eta^{\prime\prime},s^{\prime\prime})\widehat{f_5}(2^{-3l}\eta^{\prime\prime},s^{\prime\prime})\,d\eta^{\prime\prime} \\
    &\ \ \ +\sum_{\widetilde{k_2}\le -6l}\int_{\left|\eta^{\prime\prime}\right|\sim 2^{\widetilde{k_2}}}e^{is^{\prime\prime}\widetilde{\Phi}(\xi^{\prime\prime},\eta^{\prime\prime})}\widehat{f_4}(2^{-9l}\xi^{\prime\prime}-2^{-3l}\eta^{\prime\prime},s^{\prime\prime})\widehat{f_5}(2^{-3l}\eta^{\prime\prime},s^{\prime\prime})\,d\eta^{\prime\prime} 
\end{align*}
By Proposition 3.1 and volume estimate, we see that
\begin{align*}
    \left|I_{11}\right|&\lesssim \left(2^{2l}\cdot (s^{\prime\prime})^{-\frac{2}{3}-0.02}\right)+\sum_{-6l\le \widetilde{k_2}\le -3D}  2^{2k_2}\cdot\left\|\widehat{f_4}\right\|_{L^\infty}\cdot\left\|\widehat{f_5}\right\|_{L^\infty}+\sum_{\widetilde{k_2}\le -6l} 2^{2k_2}\cdot\left\|\widehat{f_4}\right\|_{L^\infty}\cdot\left\|\widehat{f_5}\right\|_{L^\infty}\\
    &\lesssim \sum_{-6l\le \widetilde{k_2}\le -3D} 2^{2k_2}\cdot 2^{2l-\frac{2}{3}k_2}\cdot 2^{2l-\frac{2}{3}k_2}+\sum_{\widetilde{k_2}\le -6l}2^{2k_2}\cdot 2^{6l}\cdot 2^{2l-\frac{2}{3}k_2}\\
    &\lesssim \left(2^{2l}\cdot (s^{\prime\prime})^{-\frac{2}{3}-0.02}\right)+ 2^{4l}\cdot \sum_{-6l\le \widetilde{k_2}\le -3D} 2^{\frac{2}{3}k_2}+2^{8l}\cdot \sum_{\widetilde{k_2}\le -6l} 2^{\frac{4}{3}k_2}\lesssim 2^{4l}\cdot (s^{\prime\prime})^{-\frac{2}{3}-0.02}.\tag{3.16}\label{3.16}
\end{align*}
In terms of $I_{12}$, we let $s^{\prime\prime}\sim 2^{-\lambda l}$ ($\lambda\ge 0.01$) and write 
$$I_{12}=\sum_{\frac{\lambda}{3}+0.01\lambda+1<\Tilde{\lambda}\le 3.001 \atop \Tilde{\lambda}l\in\mathbb{Z}} \int_{\left|\eta^{\prime\prime}\right|\sim 2^{\Tilde{\lambda}l}} e^{is^{\prime\prime} \Tilde{\Phi}(\xi^{\prime\prime},\eta^{\prime\prime})}\widehat{f_4}(2^{-9l}\xi^{\prime\prime}-2^{-3l}\eta^{\prime\prime},s^{\prime\prime})\widehat{f_5}(2^{-3l}\eta^{\prime\prime},s^{\prime\prime})\,d\eta^{\prime\prime}.$$
Then, note that $\nabla_{\eta^{\prime\prime}}\left[s^{\prime\prime} \Tilde{\Phi}(\xi^{\prime\prime},\eta^{\prime\prime})\right]=s^{\prime\prime}\left(\frac{1}{2}\xi^{\prime\prime}+O(\left|\eta^{\prime\prime}\right|^2\eta^{\prime\prime})\right)$, which implies that $\left|\nabla_{\eta^{\prime\prime}}\left[s^{\prime\prime} \Tilde{\Phi}(\xi^{\prime\prime},\eta^{\prime\prime})\right]\right|\ge 2^{-\lambda l}\cdot 2^{3\Tilde{\lambda}l}\ge 2^{0.03\lambda l}\cdot 2^{3l}$. Also note that if $\left|2^{-9l}\xi^{\prime\prime}-2^{-3l}\eta^{\prime\prime}\right|\sim\left|2^{-3l}\eta^{\prime\prime}\right|\sim 2^{-3l+\widetilde{\lambda}l}\le 2^{-3D}$, then 
$$\left|\nabla_{\eta^{\prime\prime}}\widehat{f_4}(2^{-9l}\xi^{\prime\prime}-2^{-3l}\eta^{\prime\prime},s^{\prime\prime})\right|\sim 2^{-3l}\cdot 2^{(3-\widetilde{\lambda})l}\cdot \left|\widehat{f_4}(2^{-9l}\xi^{\prime\prime}-2^{-3l}\eta^{\prime\prime},s^{\prime\prime})\right|\sim 2^{-\widetilde{\lambda}l}\cdot\left|\widehat{f_4}(2^{-9l}\xi^{\prime\prime}-2^{-3l}\eta^{\prime\prime},s^{\prime\prime})\right|;$$
otherwise if $\left|2^{-3l}\eta^{\prime\prime}\right|\ge 2^{-3D}$, then
$$\left|\nabla_{\eta^{\prime\prime}}\widehat{f_4}(2^{-9l}\xi^{\prime\prime}-2^{-3l}\eta^{\prime\prime},s^{\prime\prime})\right|\lesssim 2^{-3l}\cdot \left|\widehat{f_4}(2^{-9l}\xi^{\prime\prime}-2^{-3l}\eta^{\prime\prime},s^{\prime\prime})\right|.$$
The same results hold for $\left|\nabla_{\eta^{\prime\prime}}\widehat{f_5}(2^{-3l}\eta^{\prime\prime},s^{\prime\prime})\right|$. Thus, we can keep doing integration by parts in $\eta^\prime$ such that for all $\frac{\lambda}{3}+0.01\lambda+1<\Tilde{\lambda}\le 3.001$, we have
$$\left|\int_{\left|\eta^{\prime\prime}\right|\sim 2^{\Tilde{\lambda}l}} e^{is^{\prime\prime} \Tilde{\Phi}(\xi^{\prime\prime},\eta^{\prime\prime})}\widehat{f_4}(2^{-9l}\xi^{\prime\prime}-2^{-3l}\eta^{\prime\prime},s^{\prime\prime})\widehat{f_5}(2^{-3l}\eta^{\prime\prime},s^{\prime\prime})\,d\eta^{\prime\prime}\right|\lesssim 2^{-11\lambda l}.$$
This gives
\begin{align*}
    \left|I_{12}\right|\lesssim \sum_{\frac{\lambda}{3}+0.01\lambda+1<\Tilde{\lambda}\le 3.001 \atop \Tilde{\lambda}l\in\mathbb{Z}} 2^{-11\lambda l} \lesssim \left|l\right|\cdot 2^{-11\lambda l}\lesssim 2^{-10\lambda l}\lesssim \left(s^{\prime\prime}\right)^{10}\lesssim \left(s^{\prime\prime}\right)^{-\frac{2}{3}-0.02}.\tag{3.17}\label{3.17}
\end{align*}
Combining (\ref{3.3}) and (\ref{3.4}), we conclude
\begin{align*}
    \left|\int_0^{2^{-0.01l}} J(s^{\prime\prime})\,ds^{\prime\prime}\right|\lesssim 2^{4l} \cdot\int_0^{2^{-0.01l}} (s^{\prime\prime})^{-\frac{2}{3}-0.02}\,ds^{\prime\prime}\lesssim 2^{4l}\cdot 2^{-\frac{47}{15000}l}\lesssim 2^{3.997l}.
\end{align*}
On the other hand, we suppose that $\left|s^{\prime\prime}\right|\ge 2^{0.05l}$. Note that $\widehat{f_4}(\xi)$ and $\widehat{f_5}(\xi)$ support on $\left\{\xi:\left|\xi\right|\le 4\right\}$. By Corollary 2.3, we know that there exists $\eta^{\prime\prime}(\xi^{\prime\prime})$ such that $\nabla_{\eta^{\prime\prime}}\Tilde{\Phi}(\xi^{\prime\prime},\eta^{\prime\prime}(\xi^{\prime\prime}))=0$ and 
$$\left|2^{-9l}\xi^{\prime\prime}-2^{-3l}\eta^{\prime\prime}(\xi^{\prime\prime})\right|\sim\left|2^{-3l}\eta^{\prime\prime}(\xi^{\prime\prime})\right|\sim 2^{-3l},$$
then we can write
\begin{align*}
    J(s^{\prime\prime})=C e^{is^{\prime\prime} \Tilde{\Phi}(\xi^{\prime\prime},\eta^{\prime\prime}(\xi^{\prime\prime}))}\widehat{f_4}(2^{-9l}\xi^{\prime\prime}-2^{-3l}\eta^{\prime\prime}(\xi^{\prime\prime}),s^{\prime\prime})\widehat{f_5}(2^{-3l}\eta^{\prime\prime}(\xi^{\prime\prime}),s^{\prime\prime})\cdot\frac{1}{s^{\prime\prime}}+O\left(\frac{1}{(s^{\prime\prime})^2}\right).
\end{align*}
Note that proposition 3.5 gives us that
\begin{align*}
    \begin{cases}
    \left|\partial_t \widehat{P_{-3l} f_4}(2^{-9l}\xi^{\prime\prime}-2^{-3l}\eta^{\prime\prime}(\xi^{\prime\prime})),t)\right|\approx \displaystyle{2^{2l}\cdot\frac{e^{i(\alpha_1\cdot 2^{-4l}\cdot t)}}{t}} \\
    \left|\partial_t \widehat{P_{-3l} f_5}(2^{-3l}\eta^{\prime\prime}(\xi^{\prime\prime})),t)\right|\approx \displaystyle{2^{2l}\cdot\frac{e^{i(\alpha_2\cdot 2^{-4l}\cdot t)}}{t}}
\end{cases},
\end{align*}
where we also see that $\alpha_1\approx \alpha_2 \approx \Tilde{\Phi}(\xi^{\prime\prime},\eta^{\prime\prime}(\xi^{\prime\prime}))$ due to the fact that $\left|\xi^{\prime\prime}\right|\sim 1$. Therefore, we have $\Tilde{\Phi}(\xi^{\prime\prime},\eta^{\prime\prime}(\xi^{\prime\prime}))-\alpha_i \cdot 2^{-4l}\triangleq \beta_i\neq 0$, where $i=1,2$. Now, integrate by parts in $s^{\prime\prime}$ and we get
\begin{align*}
    &\left|\int_{2^{0.05l}}^{+\infty} J(s^{\prime\prime})\,ds^{\prime\prime}\right|\\
    \lesssim& \left|\int_{2^{0.05l}}^{+\infty} \frac{e^{is^{\prime\prime} \Tilde{\Phi}(\xi^{\prime\prime},\eta^{\prime\prime}(\xi^{\prime\prime}))}}{s^{\prime\prime}}\widehat{f_4}(2^{-9l}\xi^{\prime\prime}-2^{-3l}\eta^{\prime\prime}(\xi^{\prime\prime}),s^{\prime\prime})\widehat{f_5}(2^{-3l}\eta^{\prime\prime}(\xi^{\prime\prime}),s^{\prime\prime})\,ds^{\prime\prime}\right|+\int_{2^{0.05l}}^{+\infty} \frac{ds^{\prime\prime}}{(s^{\prime\prime})^2}\\
    \lesssim& \left[\left|\int_{2^{0.05l}}^{+\infty} \frac{e^{is^{\prime\prime} \Tilde{\Phi}(\xi^{\prime\prime},\eta^{\prime\prime}(\xi^{\prime\prime}))}}{ (s^{\prime\prime})^2}\widehat{f_4}(2^{-9l}\xi^{\prime\prime}-2^{-3l}\eta^{\prime\prime}(\xi^{\prime\prime}),s^{\prime\prime})\widehat{f_5}(2^{-3l}\eta^{\prime\prime}(\xi^{\prime\prime}),s^{\prime\prime})\,ds^{\prime\prime}\right|\right.\\
    &\left.+\left|\int_{2^{0.05l}}^{+\infty} \frac{e^{i\beta_2 s^{\prime\prime} }}{ (s^{\prime\prime})^2}\cdot\widehat{f_4}(2^{-9l}\xi^{\prime\prime}-2^{-3l}\eta^{\prime\prime}(\xi^{\prime\prime}),s^{\prime\prime})\,ds^{\prime\prime}\right|+\left|\int_{2^{0.05l}}^{+\infty} \frac{e^{i\beta_1 s^{\prime\prime} }}{ (s^{\prime\prime})^2}\cdot\widehat{f_5}(2^{-3l}\eta^{\prime\prime}(\xi^{\prime\prime}),s^{\prime\prime})\,ds^{\prime\prime}\right|\right]\\
    &+\frac{e^{is^{\prime\prime} \Tilde{\Phi}(\xi^{\prime\prime},\eta^{\prime\prime}(\xi^{\prime\prime}),s^{\prime\prime})}}{2^{0.05l}}+2^{-0.05l}\\
    \lesssim&\,2^{4l}\cdot 2^{-0.05l}\approx 2^{3.95l}.\tag{3.18}\label{3.18}
\end{align*}
Finally, in view of (\ref{3.6a}), we see that
\begin{align*}
    \reallywidehat{\mathcal{B}}_{\left[11.99l,12.05l\right],\left[-3.09l,-2.99l\right]}=C_\varphi \cdot 2^{6l}\cdot\int_0^{+\infty}J(s^{\prime\prime})\,ds^{\prime\prime}+O\left(2^{9.997l}\right).\tag{3.19}\label{3.19}
\end{align*}
Recall that Proposition 3.1 also tells us that if $t\ge 2^{4.05l}$ and $\left|\xi\right|\sim 2^{-3l}$, then 
$$\widehat{P_{-3l}f_i}(\xi,t)\approx 2^{2l}+e(\xi,t),$$
where $i=4,5$ and $\left|e(\xi,t)\right|\lesssim 2^{1.99l}$. Also recall that proposition 3.5 tells us that
\begin{align*}
    \begin{cases}
    \left|\partial_t e(2^{-9l}\xi^{\prime\prime}-2^{-3l}\eta^{\prime\prime}(\xi^{\prime\prime})),t)\right|\approx \displaystyle{2^{2l}\cdot\frac{e^{i(\alpha_3\cdot 2^{-4l}\cdot t)}}{t}} \\
    \left|\partial_t e(2^{-3l}\eta^{\prime\prime}(\xi^{\prime\prime})),t)\right|\approx \displaystyle{2^{2l}\cdot\frac{e^{i(\alpha_4\cdot 2^{-4l}\cdot t)}}{t}}
\end{cases},
\end{align*}
where we also see that $\alpha_3\approx \alpha_4 \approx \Tilde{\Phi}(\xi^{\prime\prime},\eta^{\prime\prime}(\xi^{\prime\prime}))$ due to the fact that $\left|\xi^{\prime\prime}\right|\sim 1$. Therefore, we can compute
\begin{align*}
    \int_{2^{-0.005l}}^{+\infty}J(s^{\prime\prime})\,ds^{\prime\prime}&=\int_{2^{-0.005l}}^{+\infty}\int_{\mathbb{R}^2} e^{is^{\prime\prime}\widetilde{\Phi}(\xi^{\prime\prime},\eta^{\prime\prime})}\widehat{f_4}(2^{-9l}\xi^{\prime\prime}-2^{-3l}\eta^{\prime\prime},s^{\prime\prime})\widehat{f_5}(2^{-3l}\eta^{\prime\prime},s^{\prime\prime})\,d\eta^{\prime\prime} ds^{\prime\prime}\\
    &\approx 2^{4l}\cdot\int_{2^{-0.005l}}^{+\infty}\int_{\mathbb{R}^2}e^{is^{\prime\prime}\widetilde{\Phi}(\xi^{\prime\prime},\eta^{\prime\prime})}\,d\eta^{\prime\prime}ds^{\prime\prime}\\
    &\ \ \ \ +2^{2l}\cdot\int_{2^{-0.005l}}^{+\infty}\int_{\mathbb{R}^2}e^{is^{\prime\prime}\widetilde{\Phi}(\xi^{\prime\prime},\eta^{\prime\prime})}e(2^{-3l}\eta^{\prime\prime},s^{\prime\prime})\,d\eta^{\prime\prime}ds^{\prime\prime}\\
    &\ \ \ \ +2^{2l}\cdot\int_{2^{-0.005l}}^{+\infty}\int_{\mathbb{R}^2}e^{is^{\prime\prime}\widetilde{\Phi}(\xi^{\prime\prime},\eta^{\prime\prime})}e(2^{-9l}\xi^{\prime\prime}-2^{-3l}\eta^{\prime\prime},s^{\prime\prime})\,d\eta^{\prime\prime}ds^{\prime\prime}\\
    &\ \ \ \ +\int_{2^{-0.005l}}^{+\infty}\int_{\mathbb{R}^2}e^{is^{\prime\prime}\widetilde{\Phi}(\xi^{\prime\prime},\eta^{\prime\prime})}e(2^{-3l}\eta^{\prime\prime},s^{\prime\prime})e(2^{-9l}\xi^{\prime\prime}-2^{-3l}\eta^{\prime\prime},s^{\prime\prime})\,d\eta^{\prime\prime}ds^{\prime\prime}\\
    &\triangleq J_1+J_2+J_3+J_4.
\end{align*}
As for $J_4$, we define $\Tilde{\Phi}(\xi^{\prime\prime},\eta^{\prime\prime}(\xi^{\prime\prime}))-\alpha_i \cdot 2^{-4l}\triangleq \beta_i\neq 0$ ($i=3,4$) like before and again let $\eta^{\prime\prime}(\xi^{\prime\prime})$ be such that $\nabla_{\eta^{\prime\prime}}\Tilde{\Phi}(\xi^{\prime\prime},\eta^{\prime\prime}(\xi^{\prime\prime}))=0$ by Corollary 2.3. Now, integrate by parts in $s^{\prime\prime}$ and we get
\begin{align*}
    &\left|\int_{2^{-0.005l}}^{+\infty}\int_{\mathbb{R}^2}e^{is^{\prime\prime}\widetilde{\Phi}(\xi^{\prime\prime},\eta^{\prime\prime})}e(2^{-9l}\xi^{\prime\prime}-2^{-3l}\eta^{\prime\prime},s^{\prime\prime}) e(2^{-3l}\eta^{\prime\prime},s^{\prime\prime})\,d\eta^{\prime\prime}ds^{\prime\prime}\right|\\
    \lesssim& \left|\int_{2^{-0.005l}}^{+\infty} \frac{e^{is^{\prime\prime} \Tilde{\Phi}(\xi^{\prime\prime},\eta^{\prime\prime}(\xi^{\prime\prime}))}}{s^{\prime\prime}}e(2^{-9l}\xi^{\prime\prime}-2^{-3l}\eta^{\prime\prime}(\xi^{\prime\prime}),s^{\prime\prime})e(2^{-3l}\eta^{\prime\prime}(\xi^{\prime\prime}),s^{\prime\prime})\,ds^{\prime\prime}\right|+\int_{2^{-0.005l}}^{+\infty} \frac{ds^{\prime\prime}}{(s^{\prime\prime})^2}\\
    \lesssim& \left[\left|\int_{2^{-0.005l}}^{+\infty} \frac{e^{is^{\prime\prime} \Tilde{\Phi}(\xi^{\prime\prime},\eta^{\prime\prime}(\xi^{\prime\prime}))}}{ (s^{\prime\prime})^2}e(2^{-9l}\xi^{\prime\prime}-2^{-3l}\eta^{\prime\prime}(\xi^{\prime\prime}),s^{\prime\prime})e(2^{-3l}\eta^{\prime\prime}(\xi^{\prime\prime}),s^{\prime\prime})\,ds^{\prime\prime}\right|\right.\\
    &\left.+\left|\int_{2^{-0.005l}}^{+\infty} \frac{e^{i\beta_3 s^{\prime\prime} }}{ (s^{\prime\prime})^2}\cdot e(2^{-9l}\xi^{\prime\prime}-2^{-3l}\eta^{\prime\prime}(\xi^{\prime\prime}),s^{\prime\prime})\,ds^{\prime\prime}\right|+\left|\int_{2^{-0.005l}}^{+\infty} \frac{e^{i\beta_4 s^{\prime\prime} }}{ (s^{\prime\prime})^2}\cdot e(2^{-3l}\eta^{\prime\prime}(\xi^{\prime\prime}),s^{\prime\prime})\,ds^{\prime\prime}\right|\right]\\
    &+\frac{e^{i\cdot 2^{-0.005l}\cdot \Tilde{\Phi}(\xi^{\prime\prime},\eta^{\prime\prime}(\xi^{\prime\prime}))}}{2^{-0.005l}}+\cdot 2^{0.005l}\\
    \lesssim& \,2^{1.99l}\cdot 2^{1.99l}\cdot 2^{0.005l}\approx 2^{3.985l}.
\end{align*}
By a similar computation, we can conclude that $\left|J_2\right|,\left|J_3\right|\lesssim 2^{3.995l}$. Thus, we get
\begin{align*}
    \int_{2^{-0.005l}}^{+\infty}J(s^{\prime\prime})\,ds^{\prime\prime}\approx 2^{4l}\cdot\int_{2^{-0.005l}}^{+\infty}\int_{\mathbb{R}^2}e^{is^{\prime\prime}\widetilde{\Phi}(\xi^{\prime\prime},\eta^{\prime\prime})}\,d\eta^{\prime\prime}ds^{\prime\prime}+O\left(2^{3.995l}\right).
\end{align*}
On the other hand, use the method in (\ref{3.16}) and (\ref{3.17}) and we 
 can compute
\begin{align*}
    \int_0^{2^{-0.005l}}J(s^{\prime\prime})\,ds^{\prime\prime}&=\int_0^{2^{-0.005l}}\int_{\mathbb{R}^2} e^{is^{\prime\prime}\widetilde{\Phi}(\xi^{\prime\prime},\eta^{\prime\prime})}\widehat{f_4}(2^{-9l}\xi^{\prime\prime}-2^{-3l}\eta^{\prime\prime},s^{\prime\prime})\widehat{f_5}(2^{-3l}\eta^{\prime\prime},s^{\prime\prime})\,d\eta^{\prime\prime} ds^{\prime\prime}\\
    &\approx 2^{4l}\cdot\int_0^{2^{-0.005l}}\int_{\mathbb{R}^2}e^{is^{\prime\prime}\widetilde{\Phi}(\xi^{\prime\prime},\eta^{\prime\prime})}\,d\eta^{\prime\prime}ds^{\prime\prime}\\
    &\ \ \ \ +2^{2l}\cdot\int_0^{2^{-0.005l}}\int_{\mathbb{R}^2}e^{is^{\prime\prime}\widetilde{\Phi}(\xi^{\prime\prime},\eta^{\prime\prime})}e(2^{-3l}\eta^{\prime\prime},s^{\prime\prime})\,d\eta^{\prime\prime}ds^{\prime\prime}\\
    &\ \ \ \ +2^{2l}\cdot\int_0^{2^{-0.005l}}\int_{\mathbb{R}^2}e^{is^{\prime\prime}\widetilde{\Phi}(\xi^{\prime\prime},\eta^{\prime\prime})}e(2^{-9l}\xi^{\prime\prime}-2^{-3l}\eta^{\prime\prime},s^{\prime\prime})\,d\eta^{\prime\prime}ds^{\prime\prime}\\
    &\ \ \ \ +\int_0^{2^{-0.005l}}\int_{\mathbb{R}^2}e^{is^{\prime\prime}\widetilde{\Phi}(\xi^{\prime\prime},\eta^{\prime\prime})}e(2^{-3l}\eta^{\prime\prime},s^{\prime\prime})e(2^{-9l}\xi^{\prime\prime}-2^{-3l}\eta^{\prime\prime},s^{\prime\prime})\,d\eta^{\prime\prime}ds^{\prime\prime}\\
    &\approx 2^{4l}\cdot\int_0^{2^{-0.005l}}\int_{\mathbb{R}^2}e^{is^{\prime\prime}\widetilde{\Phi}(\xi^{\prime\prime},\eta^{\prime\prime})}\,d\eta^{\prime\prime}ds^{\prime\prime}+O\left(2^{3.9785l}\right).
\end{align*}
Next, we see that
\begin{align*}
    \int_0^{+\infty}J(s^{\prime\prime})\,ds^{\prime\prime}&=\int_0^{2^{-0.005l}}J(s^{\prime\prime})\,ds^{\prime\prime}+\int_{2^{-0.005l}}^{+\infty}J(s^{\prime\prime})\,ds^{\prime\prime}\\
    &\approx 2^{4l}\cdot\int_0^{+\infty}\int_{\mathbb{R}^2}e^{is^{\prime\prime}\widetilde{\Phi}(\xi^{\prime\prime},\eta^{\prime\prime})}\,d\eta^{\prime\prime}ds^{\prime\prime}+O\left(2^{3.995l}\right).
\end{align*}
Note that
\begin{align*}
    \int_0^{+\infty}\int_{\mathbb{R}^2} e^{is^{\prime\prime}\widetilde{\Phi}(\xi^{\prime\prime},\eta^{\prime\prime})} \,d\eta^{\prime\prime} ds^{\prime\prime}\neq 0
\end{align*}
is a nonzero constant independent of $l$, which implies that
$$\int_0^{+\infty}J(s^{\prime\prime})\,ds^{\prime\prime}\approx 2^{4l}.$$
If $m\ge 0.05l$, then in view of (\ref{3.18}), we see that
$$\int_0^{2^m}J(s^{\prime\prime})\,ds^{\prime\prime}\approx\int_0^{+\infty}J(s^{\prime\prime})\,ds^{\prime\prime}\approx 2^{4l}.$$
Now, we compute
\begin{align*}
    \widehat{P_{-9l}f_6}(\xi,t)&=\int_0^{2^m}\int_{\mathbb{R}^2} e^{is\Phi(\xi,\eta)} \widehat{f_4}(\xi-\eta)\widehat{f_5}(\eta) \,d\eta ds \\
    &=2^{6l}\cdot\int_0^{2^{m-12l}}\int_{\mathbb{R}^2} e^{is^{\prime\prime}\widetilde{\Phi}(\xi^{\prime\prime},\eta^{\prime\prime})} \widehat{f_4}(2^{-3l}\xi^{\prime\prime}-2^{-l}\eta^{\prime\prime})\widehat{f_5}(2^{-l}\eta^{\prime\prime}) \,d\eta^{\prime\prime} ds^{\prime\prime}\\
    &\approx 2^{6l}\cdot \int_0^{+\infty} J(s^{\prime\prime})\,ds^{\prime\prime} \approx 2^{6l}\cdot 2^{4l}\approx 2^{10l}.
\end{align*}
Namely, $\left|\widehat{P_{-9l}f_6}(\xi,t)\right|\approx 2^{10l}$, provided that $m\ge 12.05l$ and $\left|\xi\right|\sim 2^{-9l}$. Moreover, we have that
\begin{align*}
   \widehat{P_{-9l}f_6}(\xi,t)=\reallywidehat{\mathcal{B}}_{\left[11.99l,12.05l\right],\left[-3.09l,-2.99l\right]}+O\left(2^{9.997l}\right).
\end{align*}
\end{proof}
\vspace{0.8em}
\begin{theorem}
    For any $l\ge 10D\gg 1$, we fix $l$. If $t\ge 2^{12.05l}$, then $\left\|P_{-9l} f_6\right\|_{L^2}\approx 2^l\gg 1$
\end{theorem}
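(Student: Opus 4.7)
The plan is to reduce this $L^2$ estimate to an immediate consequence of the pointwise bounds already established in Proposition 3.6, via Plancherel's theorem.

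First, I would invoke Plancherel's identity to write
\begin{align*}
    \left\|P_{-9l} f_6\right\|_{L^2}^2 = \left\|\widehat{P_{-9l} f_6}\right\|_{L^2}^2 = \int_{\mathbb{R}^2} \left|\widehat{P_{-9l} f_6}(\xi,t)\right|^2 \, d\xi.
\end{align*}
Since the Littlewood–Paley multiplier $\varphi_{-9l}$ is supported in the annulus $\left|\xi\right|\sim 2^{-9l}$, the function $\widehat{P_{-9l} f_6}(\cdot,t)$ is supported there as well, so the integral reduces to an integral over that annulus.

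Next, I would apply Proposition 3.6, which gives the two-sided pointwise estimate $\left|\widehat{P_{-9l} f_6}(\xi,t)\right|\approx 2^{10l}$ uniformly for $\left|\xi\right|\sim 2^{-9l}$ whenever $t\ge 2^{12.05l}$. Combining this with the fact that the annulus $\left|\xi\right|\sim 2^{-9l}$ in $\mathbb{R}^2$ has Lebesgue measure of order $2^{-18l}$, we obtain
\begin{align*}
    \left\|P_{-9l} f_6\right\|_{L^2}^2 \approx \left(2^{10l}\right)^2 \cdot 2^{-18l} = 2^{2l},
\end{align*}
so taking square roots yields $\left\|P_{-9l} f_6\right\|_{L^2}\approx 2^l$, as desired.

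There is essentially no obstacle here — all the genuine analytic work (the stationary phase analysis, the non-degeneracy of $\nabla_{\eta^\prime}\widetilde{\Phi}$, the cancellation of error terms down to size $2^{9.997l}$) was already carried out in Proposition 3.6. The only point worth being careful about is to verify that the error term $e(\xi,t)$ of size $\lesssim 2^{9.997l}$ from Proposition 3.6 does not spoil the matching upper bound: since $2^{9.997l}\ll 2^{10l}$, the main term $\approx 2^{10l}$ dominates pointwise and we get both the upper and lower inequalities in the $\approx$ relation. Since $l\gg 1$, the conclusion $\left\|P_{-9l} f_6\right\|_{L^2}\approx 2^l\gg 1$ immediately gives the blow-up contribution needed for the main theorem.
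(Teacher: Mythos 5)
Your proposal is correct and is essentially identical to the paper's own proof: both combine the two-sided pointwise bound $\left|\widehat{P_{-9l}f_6}(\xi,t)\right|\approx 2^{10l}$ (Proposition 3.7 in the paper's numbering) with the measure $\sim 2^{-18l}$ of the annulus $\left|\xi\right|\sim 2^{-9l}$, via Plancherel, to get $\left\|P_{-9l}f_6\right\|_{L^2}\approx 2^{10l}\cdot 2^{-9l}=2^l$. No further comment is needed.
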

\begin{proof}
Denote $\left|E_\eta\right|=\mbox{the volume of annulus }\left|\xi\right|\sim 2^{-9l}$. Using Proposition 3.7, this follows from
$$\inf_{\left|\xi\right|\sim 2^{-9l}}\left|\widehat{P_{-9l}f_6}(\xi,t)\right|\cdot\left|E_\eta\right|^{1/2}\lesssim\left\|\widehat{P_{-9l}f_6}\right\|_{L^2}\lesssim \sup_{\left|\xi\right|\sim 2^{-9l}}\left|\widehat{P_{-9l}f_6}(\xi,t)\right|\cdot\left|E_\eta\right|^{1/2}.$$
\end{proof}

\bibliographystyle{plain}
\bibliography{sample}

\end{document}